\documentclass[11pt,leqno]{article}
\usepackage[doc]{optional}
\usepackage{enumitem}

\usepackage{wrapfig}
\usepackage{color}
\usepackage{float}
\usepackage{soul}
\usepackage{graphicx}
\usepackage{graphics}
\definecolor{labelkey}{rgb}{0,0.08,0.45}
\definecolor{refkey}{rgb}{0,0.6,0.0}

\definecolor{Brown}{rgb}{0.45,0.0,0.05}
\definecolor{lime}{rgb}{0.00,0.8,0.0}
\definecolor{lblue}{rgb}{0.5,0.5,0.99}
\definecolor{lblue}{rgb}{0.8,0.85,1.00}
\definecolor{anotherblue}{rgb}{.8, .8,1}
\definecolor{violet}{rgb}{0.9,0.6,0.9}
\definecolor{greenyellow}{rgb}{0.53,0.99,0.18}

\definecolor{Lyellow}{rgb}{0.87,0.87,0.87}
\definecolor{Lgray}{rgb}{0.92,0.92,0.92}
\definecolor{Mgray}{rgb}{0.5,0.5,0.5}
\definecolor{Gold}{rgb}{0.99,0.84,0.0}

\definecolor{myblue}{rgb}{.8, .8, 1}
  \newcommand*\mybluebox[1]{%
    \colorbox{myblue}{\hspace{1em}#1\hspace{1em}}}

\usepackage{mathpazo}
\usepackage{amsmath}
\usepackage{amssymb}
\usepackage{theorem}
\usepackage{fancyhdr}
\usepackage{empheq}

\usepackage[margin=1.0in]{geometry}

\usepackage{hyperref}

\parindent  4mm
\parskip    4pt 
\tolerance  3000

\pagestyle{fancy} \fancyhead{} \fancyfoot{}
\fancyfoot[C]{\thepage}


\newcommand{\weakly}{\ensuremath{\:{\rightharpoonup}\:}}

\newcommand{\frechet}{Fr\'echet}
\newcommand{\gateaux}{G\^ateaux}
\newcommand{\nnn}{\ensuremath{{n\in{\mathbb N}}}}

\newcommand{\thalb}{\ensuremath{\tfrac{1}{2}}}
\newcommand{\menge}[2]{\big\{{#1}~\big |~{#2}\big\}}

\newcommand{\To}{\ensuremath{\rightrightarrows}}

\newcommand{\fenv}[1]%
{\ensuremath{\,\overrightarrow{\operatorname{env}}_{#1}}}
\newcommand{\benv}[1]%
{\ensuremath{\,\overleftarrow{\operatorname{env}}_{#1}}}

\newcommand{\scal}[2]{\left\langle{#1},{#2}  \right\rangle}

\newcommand{\GG}{\ensuremath{\mathcal G}}

\newcommand{\zeroun}{\ensuremath{\left]0,1\right[}}
\newcommand{\RR}{\ensuremath{\mathbb R}}

\newcommand{\NN}{\ensuremath{\mathbb N}}

\newcommand{\ball}{\ensuremath{\mathrm{ball}}}

\newcommand{\argmin}{\ensuremath{\operatorname{argmin}}}

\newcommand{\prox}{\ensuremath{\operatorname{P}}}

\newcommand{\bd}{\ensuremath{\operatorname{bdry}}}

\newcommand{\ran}{\ensuremath{\operatorname{ran}}}

\newcommand{\conv}{\ensuremath{\operatorname{conv}}}
\newcommand{\sgn}{\ensuremath{\operatorname{sgn}}}
\newcommand{\cone}{\ensuremath{\operatorname{cone}}}

\newcommand{\Fix}{\ensuremath{\operatorname{Fix}}}

\newcommand{\reck}{\ensuremath{\operatorname{rec}}}
\newcommand{\Id}{\ensuremath{\operatorname{Id}}}

\newcommand{\pinf}{\ensuremath{+\infty}}




%
{\begin{list}{}{%
\settowidth{\labelwidth}{\textrm{#1~}}%
\setlength{\leftmargin}{\labelwidth+\labelsep}}}
{\end{list}}
\newtheorem{theorem}{Theorem}[section]
\newtheorem{lemma}[theorem]{Lemma}
\newtheorem{corollary}[theorem]{Corollary}
\newtheorem{proposition}[theorem]{Proposition}

\theoremstyle{plain}{\theorembodyfont{\rmfamily}
}
\theoremstyle{plain}{\theorembodyfont{\rmfamily}
}
\theoremstyle{plain}{\theorembodyfont{\rmfamily}
}
\theoremstyle{plain}{\theorembodyfont{\rmfamily}
\newtheorem{example}[theorem]{Example}}
\newtheorem{fact}[theorem]{Fact}
\theoremstyle{plain}{\theorembodyfont{\rmfamily}
\newtheorem{remark}[theorem]{Remark}}


\def\doi{DOI}





\newcounter{count}

\allowdisplaybreaks

\begin{document}

\title{\textrm{On subgradient projectors}}

\author{
Heinz H.\ Bauschke\thanks{Mathematics, University of British
Columbia, Kelowna, B.C.\ V1V~1V7, Canada. E-mail:
\texttt{heinz.bauschke@ubc.ca}.},~
Caifang Wang\thanks{Department of Mathematics,
Shanghai Maritime University, China. E-mail: \texttt{cfwang@shmtu.edu.cn}.},~Xianfu Wang\thanks{Mathematics, University of
 British Columbia, Kelowna, B.C.\ V1V~1V7, Canada. E-mail: \texttt{shawn.wang@ubc.ca}.},~
 and
Jia Xu\thanks{Mathematics, University of British Columbia,
Kelowna, B.C.\ V1V~1V7, Canada. E-mail:
\texttt{jia.xu@ubc.ca}.}}

\date{March 27, 2014}

\maketitle \thispagestyle{fancy}

\vskip 8mm

\begin{abstract} \noindent
The subgradient projector is of considerable importance in convex
optimization because it plays the key role in
Polyak's seminal work --- and the many papers it spawned --- 
on subgradient projection algorithms 
for solving convex feasibility problems. 

In this paper, we offer a systematic study of the subgradient
projector. Fundamental properties such as continuity, nonexpansiveness,
and monotonicity are investigated. We also discuss the
Yamagishi--Yamada operator. Numerous examples illustrate our results.
\end{abstract}

{\small \noindent {\bfseries 2010 Mathematics Subject
Classification:} {Primary 90C25; 
Secondary 47H04, 47H05, 47H09. 
}
}

\noindent {\bfseries Keywords:}
Convex function, 
firmly nonexpansive mapping,
\frechet\ differentiability,
\gateaux\ differentiability, 
monotone operator,
nonexpansive mapping,
subgradient projector,
Yamagishi--Yamada operator. 


\section{Introduction}

Throughout this paper,
we assume that
\begin{empheq}[box=\mybluebox]{equation}
\text{$X$ is a real Hilbert space}
\end{empheq}
with inner product $\scal{\cdot}{\cdot}$ and induced norm
$\|\cdot\|$. 
We also assume that 
\begin{empheq}[box=\mybluebox]{equation}
\text{$f\colon X\to\RR$ is convex and continuous, and $C = \menge{x\in
X}{f(x)\leq 0}\neq\varnothing$. }
\end{empheq}
(When $X$ is finite-dimensional,
we do not need to explicitly impose continuity on $f$.)
Unless stated otherwise, we assume that $s\colon X\to X$ 
is a \emph{selection}
of $\partial f$, i.e.,
\begin{empheq}[box=\mybluebox]{equation}
(\forall x\in X)\quad s(x)\in \partial f(x)
\end{empheq}
and that 
$G\colon X\to X$ is the \emph{associated subgradient projector} defined by 
\begin{empheq}[box=\mybluebox]{equation}
(\forall x\in X)\quad 
Gx = 
\begin{cases}
\displaystyle x - \frac{f(x)}{\|s(x)\|^2}s(x), &\text{if $f(x)>0$;}\\
x, &\text{otherwise.}
\end{cases}
\end{empheq}
Observe this is well defined because $C\neq\varnothing$ and thus
$0\notin\partial f(X\smallsetminus C)$. 

When we need to exhibit the underlying function $f$ or
subgradient selection $s$, we shall write
$s_f$, $C_f$ and $G_f=G_{f,s}$ instead of 
$s$, $C$ and $G$, respectively. 

The subgradient projector is the key ingredient in 
Polyak's seminal work \cite{Poljak} on subgradient projection 
algorithms\,\footnote{See also \cite{Goffin} for a historical
account.},
which have since found many applications; see, e.g.,
\cite{bb96}, \cite{MOR}, \cite{Cegielski}, \cite{CL},
\cite{CS}, \cite{CenZen}, \cite{Comb93}, \cite{Comb97},
\cite{CombLuo}, 
\cite{Polyakbook}, \cite{PolyakHaifa}, 
\cite{SY}, \cite{YO1}, \cite{YO2}, \cite{YSY},
and the references therein. 

{\em 
The aim of this paper is to provide a systematic 
study of the subgradient operator.
We review known properties, present basic calculus rules,
obtain characterization of strong-to-strong and strong-to-weak
continuity, analyze nonexpansiveness, monotonicity, and
the decreasing property, and discuss the relationship to the 
Yamagishi--Yamada operator. Numerous examples illustrate our
results.
}

The paper is organized as follows. 
Basic properties are reviewed in Section~\ref{s:prelim},
and basic calculus rules are derived in Section~\ref{s:calc}.
Section~\ref{s:examples} is a collection of examples.
The relationship between 
strong-to-strong (resp.\ strong-to-weak) continuity of $G$
and \frechet\ (resp.\ \gateaux) differentiability of $f$ is
clarified in Section~\ref{s:contF} (resp.\ Section~\ref{s:contG}). 
The case when $f$ arises from a quadratic form is investigated in
Section~\ref{s:Frank}. Nonexpansiveness and the decreasing
property are studied in Section~\ref{s:nonexp} and
\ref{s:decrease}, respectively. 
These properties are illustrated with in Section~\ref{s:pnorm}.
In the final Section~\ref{s:YY}, 
we provide a sufficient condition for the  Yamagishi--Yamada
operator to be itself a subgradient projector. 

Notation and terminology are standard and follow largely
\cite{BC2011} to which we refer the reader if needed. 
We do write $\prox_f = (\Id+\partial f)^{-1}$ 
for the proximity operator (proximal mapping) of $f$.

\section{Preliminary results}

\label{s:prelim}

Let us record some basic results on subgradient projectors, which are
essentially contained already in \cite{Poljak} and the
proofs of which we provide for completeness.

\begin{fact}
\label{f:known}
Let $x\in X$, and set
\begin{equation}
H = \menge{y\in X}{\scal{s(x)}{y-x} + f(x) \leq 0}.
\end{equation}
Then the following hold:
\begin{enumerate}
\item 
\label{f:known0}
$f^+(x)+\scal{s(x)}{Gx-x}=0$.
\item 
\label{f:known1}
$\Fix G = C\subseteq H$. 
\item 
\label{f:known2}
$Gx=P_Hx$. 
\item 
\label{f:known3}
$(\forall c\in C)$ 
$\scal{c-Gx}{x-Gx}\leq 0$. 
\item 
\label{f:known3+}
$(\forall c\in C)$ 
$\|x-Gx\|^2 + \|Gx-c\|^2 \leq \|x-c\|^2$.
\item 
\label{f:known4}
$f^+(x) = \|s(x)\|\|x-Gx\|$.
\item 
\label{f:known4+}
If $x\notin C$, then $(\forall c\in C)$ 
$f^2(x)\|s(x)\|^{-2}+\|Gx-c\|^2 \leq \|x-c\|^2$. 
\item 
\label{f:known5}
$f^+(x)(x-Gx) = \|x-Gx\|^2s(x)$. 
\item
\label{f:known6}
Suppose that $f$ is \frechet\ differentiable at $x\in
X\smallsetminus C$.
Then $g = \ln\circ f\colon X\smallsetminus C \to\RR$ 
is \frechet\ differentiable at $x$ and
$Gx = x - \nabla g(x)/\|\nabla g(x)\|^2$. 
\item
\label{f:known7}
Suppose that $\min f(X)=0$, that $f$ is \frechet\ differentiable on $X$ with
$\nabla f$ being Lipschitz continuous with constant $L$,
that $x\notin C$, 
and that there exists $\alpha>0$ such that 
$f(x)\geq \alpha d_C^2(x)$. Then
$d^2_C(Gx) \leq (1-\alpha^2/L^2)d_C^2(x)$. 
\item
\label{f:known8}
Suppose that $\min f(X)=0$, that $x\notin C$, 
and that there exists $\alpha>0$ such that 
$f(x)\geq \alpha d_C(x)$. Then
$d^2_C(Gx) \leq (1-\alpha^2/\|s(x)\|^2)d_C^2(x)$. 
\end{enumerate}
\end{fact}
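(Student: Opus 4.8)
The plan is to derive the bound as an immediate consequence of item~\ref{f:known4+}, the distance-decrease inequality already in hand. Since $\min f(X)=0$, the set $C$ coincides with the set of minimizers of $f$, and $x\notin C$ forces $f(x)>0$; in particular $s(x)\neq 0$, so every quantity below is well defined. Applying item~\ref{f:known4+} at $x$ gives, for every $c\in C$,
\begin{equation}
\|Gx-c\|^2\leq\|x-c\|^2-\frac{f^2(x)}{\|s(x)\|^2}.
\end{equation}

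First I would take the infimum over $c\in C$ on both sides. Because the subtracted term $f^2(x)/\|s(x)\|^2$ carries no dependence on $c$, it passes through the infimum untouched, and the pointwise inequality above upgrades to
\begin{equation}
d_C^2(Gx)=\inf_{c\in C}\|Gx-c\|^2\leq\inf_{c\in C}\Big(\|x-c\|^2-\frac{f^2(x)}{\|s(x)\|^2}\Big)=d_C^2(x)-\frac{f^2(x)}{\|s(x)\|^2}.
\end{equation}
This interchange of the infimum with the constant subtraction is the only step that deserves a moment's care, and it is legitimate precisely because the subtracted quantity is independent of $c$.

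Finally I would feed in the growth hypothesis. Both $f(x)$ and $d_C(x)$ are nonnegative, so $f(x)\geq\alpha d_C(x)$ squares to $f^2(x)\geq\alpha^2 d_C^2(x)$; substituting this into the last display yields
\begin{equation}
d_C^2(Gx)\leq d_C^2(x)-\frac{\alpha^2 d_C^2(x)}{\|s(x)\|^2}=\Big(1-\frac{\alpha^2}{\|s(x)\|^2}\Big)d_C^2(x),
\end{equation}
which is the claim. I do not anticipate any genuine obstacle: the whole argument is a two-line corollary of item~\ref{f:known4+} together with one squaring of the hypothesis. The assumption $\min f(X)=0$ is what places us in the natural error-bound setting --- with $C$ the minimizer set and $d_C$ the distance to it --- but it is not otherwise invoked in the computation itself.
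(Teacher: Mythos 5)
Your proof of item \ref{f:known8} is correct and takes essentially the same approach as the paper: both deduce the claim from item \ref{f:known4+} combined with squaring the error bound $f(x)\geq\alpha d_C(x)$. The only cosmetic difference is that the paper instantiates $c=P_Cx$ and then passes to $d_C$, while you take the infimum over all $c\in C$; the two steps are equivalent, and your side remark that $\min f(X)=0$ is never actually used in the computation is also accurate.
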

\begin{proof}
Let $z\in X$.
\ref{f:known0}: This follows directly from the definition of $G$. 

\ref{f:known1}: 
The equality is clear from the definition of $G$. 
Assume that $z\in C$. Then
$\scal{s(x)}{z-x} + f(x) \leq f(z) \leq 0$ and hence $z\in H$. 

\ref{f:known2}: 
Assume first that $x\in C$.
Then $x\in\Fix G\subseteq H$ by \ref{f:known1}
and hence $Gx=x= P_Hx$. 
Now assume that $x\notin C$. 
Then $0<f(x)=f^+(x)$ and $s(x)\neq 0$. 
Hence, 
\begin{equation}
P_Hx = x - \frac{\Big(\scal{s(x)}{x}-
\big( \scal{s(x)}{x}-f(x)\big)\Big)^+}{\|s(x)\|^2}s(x)
= x - \frac{f^+(x)}{\|s(x)\|^2}s(x) = Gx. 
\end{equation}

\ref{f:known3}: 
In view of \ref{f:known2}, we have
$(\forall h\in H)$ $\scal{h-Gx}{x-Gx} \leq 0$. 
Now invoke \ref{f:known1}. 

\ref{f:known3+}: 
This is equivalent to \ref{f:known3}. 

\ref{f:known4}: 
Assume first that $x\in C$.
Then $f(x)\leq 0$, i.e., $f^+(x)=0$, and $x=Gx$ by \ref{f:known1}.
Hence the identity is true.
Now assume that $x\notin C$. 
Then $0<f(x)=f^+(x)$ and $x-Gx = f(x)/\|s(x)\|^2 s(x)$. 
Taking the norm, we learn that
$\|x-Gx\| = f(x)/\|s(x)\|=f^+(x)/\|s(x)\|$. 

\ref{f:known4+}: Combine \ref{f:known1}, \ref{f:known3+}, and
\ref{f:known4}. 

\ref{f:known5}: 
This follows from \ref{f:known4} and the definition of $G$. 

\ref{f:known6}: The chain rule implies that
$\nabla g(x) = (1/f(x))\nabla f(x)$. 
Hence $\|\nabla g(x)\|^2 = \|\nabla f(x)\|^2/f^2(x)$ and thus
$x-\nabla g(x)/\|\nabla g(x)\|^2 = x - f(x)/\|\nabla f(x)\|^2 \nabla
f(x) = Gx$. 

\ref{f:known7}: 
Let $c\in C$. Then $\nabla f(c)=0$
and hence $\|\nabla f(x)\| = 
\|\nabla f(x)-\nabla f(c)\|\leq L\|x-c\|$.
Hence $\|\nabla f(x)\|\leq Ld_C(x)$ and therefore,
using \ref{f:known4+}, we obtain 
\begin{equation}
\|Gx-c\|^2 \leq \|x-c\|^2 - \frac{f^2(x)}{\|\nabla f(x)\|^2}
\leq \|x-c\|^2 - \frac{\alpha^2d_C^4(x)}{L^2d_C^2(x)}. 
\end{equation}
Now take the minimum over $c\in C$. 

\ref{f:known8}: 
Using \ref{f:known4+}, we have
\begin{equation}
d^2_C(Gx) \leq \|Gx-P_Cx\|^2 \leq \|x-P_Cx\|^2 - \frac{f^2(x)}{\|s(x)\|^2}
\leq d_C^2(x) - \frac{\alpha^2d_C^2(x)}{\|s(x)\|^2}.
\end{equation}
The proof is complete.
\end{proof}

\section{Calculus}

\label{s:calc}

We now turn to basic calculus rules.
When the proof is a straight-forward verification, we will omit
it. It is convenient to introduce the operator
$\GG \colon X\To X$, defined by
\begin{equation}
(\forall x\in X)\quad
\GG x = \GG_f x = \menge{G_s(x)}{\text{$s$ is a selection of $\partial
f$}}.
\end{equation}
When $G$ is \gateaux\ differentiable outside $C$, then we will
identify $\GG$ with $G$. 

\begin{proposition}[calculus]
\label{p:calc}
Let $\alpha>0$, let $A\colon X\to X$ be continuous and linear
such that $A^*A = A^*A = \Id$, and let $z\in X$.
Furthermore, let $(f_i)_{i\in I}$ be a finite family of convex
continuous functions on $X$ such that $\bigcap_{i\in I}
C_{f_i}\neq\varnothing$.
Then the following hold:
\begin{enumerate}
\item
\label{p:calc1}
Suppose that $g = \alpha f$. 
Then $C_{g} = C_f$ and $\GG_g = \GG_f$. 
\item
\label{p:calc2}
Suppose that $g =  f\circ \alpha\Id$. 
Then $C_{g} = \alpha^{-1}C_f$ and 
$\GG_g = \alpha^{-1}\GG_f\circ \alpha\Id$. 
\item
\label{p:calc3}
Suppose that $f\geq 0$ and that $g = f^\alpha$ is convex.
Then $C_{g} = C_f$ and $\GG_g = (1-\alpha^{-1})\Id +
\alpha^{-1}\GG_f$.
\item
\label{p:calc4}
Suppose that $g = f \circ A$. 
Then
$C_g = A^*C_f$ and $\GG_g = A^*\circ \GG_f \circ A$. 
\item
\label{p:calc5}
Suppose that $g \colon x\mapsto f(x-z)$.
Then
$C_g = z+C_f$ and $\GG_g\colon x\mapsto z+\GG_f(x-z)$. 
\item
\label{p:calc6}
Suppose that $g=\max_{i\in I} f_i$.
Then $C_g = \bigcap_{i\in I} C_{f_i}$ and
if $g(x)>0$ and $I(x) = \menge{i\in I}{f_i(x)=g(x)}$, then 
$\GG_g(x) = \menge{x - g(x)\|x^*\|^{-2}x^*}{x^*\in\conv \bigcup_{i\in I(x)}\partial f_i(x)}$. 
\item
\label{p:calc7}
Suppose that $g = f^+$. 
Then $\GG_g = \GG_f$. 
\item
\label{p:calc8}
\textbf{(Moreau envelope)}
Suppose that $\min f(X)=0$ and that
$g = f\Box (1/2)\|\cdot\|^2$ is the Moreau envelope of $f$.
Then $C_g = C_f$ and 
\begin{equation}
(\forall x\in X)\quad
G_g(x) = \begin{cases}
x - \displaystyle 
\frac{g(x)}{\|x-\prox_fx\|^2}(x-\prox_fx), &\text{if $f(x)>0$;}\\
x, &\text{if $f(x)=0$.}
\end{cases}
\end{equation}
\end{enumerate}
\end{proposition}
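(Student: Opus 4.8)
The plan is to treat all eight items with a single three-step template and to isolate the two that carry real content. First, I would establish the sublevel-set identity (the $C_g=\cdots$ half of each claim) directly from the equivalence $g(x)\le 0\iff(\cdots)$. Second, I would apply the appropriate subdifferential calculus rule to express an arbitrary selection $s_g$ of $\partial g$ in terms of a selection $s_f$ of $\partial f$ (or, in the envelope case, to pin down $\nabla g$). Third, I would substitute $s_g$ and the value $g(x)$ into the definition of $G_g$ and simplify back to an expression in $G_f$. The recurring algebraic device is the identity $x-G_fx=\big(f(x)/\|s_f(x)\|^2\big)s_f(x)$, valid whenever $f(x)>0$ (this is immediate from the definition of $G_f$; cf.\ Fact~\ref{f:known}\ref{f:known5}), which converts each scaled-subgradient displacement into a displacement toward $G_fx$. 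Well-definedness of $G_g$ off $C_g$ is automatic, since the relevant intersection of sublevel sets is assumed nonempty and hence $0\notin\partial g$ there.

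Items (i)--(v) and (vii) are then routine single-function chain rules, and in each the chain rule sets up a bijection between selections of $\partial f$ and of $\partial g$, so the set-valued identities hold at the level of $\GG$. For $g=\alpha f$ I would use $\partial g=\alpha\partial f$, whence $s_g=\alpha s_f$ and the two factors of $\alpha$ cancel in $g(x)\|s_g(x)\|^{-2}s_g(x)$, giving $\GG_g=\GG_f$. For $g=f\circ\alpha\Id$, $g=f^\alpha$, $g=f\circ A$, and $g\colon x\mapsto f(x-z)$ I would use, respectively, $\partial g(x)=\alpha\,\partial f(\alpha x)$, $\partial g(x)=\alpha f(x)^{\alpha-1}\partial f(x)$ on $\{f>0\}$, $\partial g(x)=A^*\partial f(Ax)$, and $\partial g(x)=\partial f(x-z)$. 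In the composition case $A^*A=AA^*=\Id$ makes $A^*$ norm-preserving, so $\|s_g(x)\|=\|s_f(Ax)\|$; a short computation then collapses the displacement (using $A^*A=\Id$ or the homogeneity of the scalar prefactors) to $\alpha^{-1}G_f(\alpha x)$, $(1-\alpha^{-1})\Id+\alpha^{-1}G_f$, $A^*G_f(Ax)$, and $z+G_f(x-z)$. Item (vii) is the special case $g=f^+=\max\{f,0\}$: on $X\smallsetminus C_f$ one has $f>0$, so $g=f$ and $\partial g=\partial f$ there, while on $C_f$ both operators are the identity, whence $\GG_g=\GG_f$.

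Item (vi) is the one demanding genuine input, and I expect it to be the main obstacle. The identity $C_g=\bigcap_{i\in I}C_{f_i}$ is immediate, but the projector formula hinges on the max rule for subdifferentials: since the finite family is convex and continuous, $\partial g(x)=\conv\bigcup_{i\in I(x)}\partial f_i(x)$, where $I(x)$ is the active index set. I would invoke this standard result (the continuity of the $f_i$ makes the rule apply with no further qualification), note that the selections $s_g(x)$ are \emph{exactly} the points of $\conv\bigcup_{i\in I(x)}\partial f_i(x)$, and then read off $\GG_g(x)$ as the image of that set under $x^*\mapsto x-g(x)\|x^*\|^{-2}x^*$, which is the displayed formula. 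The care required is purely set-valued bookkeeping: $\GG_g(x)$ must be the full set swept out as $x^*$ ranges over the active convex hull, not merely over the individual $\partial f_i(x)$.

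Finally, item (viii) rests on the Moreau-envelope facts that $g=f\,\infconv\,\tfrac12\|\cdot\|^2$ is everywhere finite, convex, and \frechet\ differentiable with $\nabla g=\Id-\prox_f$. I would first verify $C_g=C_f$: since $\min f(X)=0$ forces $f\ge 0$ and hence $g\ge 0$, while $g\le f$, the value $g(x)=0$ makes the infimal-convolution minimizer equal to $x$ with $f(x)=0$; thus $\{g=0\}=\{f=0\}=C_f$, which also yields the case equivalence $g(x)>0\iff f(x)>0$. With the gradient identified, $\partial g(x)=\{\nabla g(x)\}=\{x-\prox_fx\}$ is the unique selection, and substituting $s_g(x)=x-\prox_fx$ together with the numerator $g(x)$ into the definition of $G_g$ delivers the stated case formula directly.
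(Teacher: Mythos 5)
Your proposal is correct and follows essentially the same route as the paper: identify the sublevel sets, invoke the relevant subdifferential calculus rule (scaling, chain rule for the power and for composition with $\alpha\Id$, unitary composition with $A$, translation, the max formula for finite maxima, and $\nabla g=\Id-\prox_f$ with $\argmin g=\argmin f$ for the Moreau envelope), and substitute into the definition of $G$. The only cosmetic differences are that you deduce the set equalities for $\GG$ in one stroke from exactness of these calculus rules, where the paper proves a single inclusion and notes the reverse is proved similarly, and that you handle \ref{p:calc7} directly via locality of the subdifferential on the open set $\{f>0\}$ rather than as the special case $f^+=\max\{0,f\}$ of \ref{p:calc6}; neither changes the substance.
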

\begin{proof}
Let $x\in X$.
We shall only prove one inclusion for the subgradient projector
as the remaining one is proved similarly.

\ref{p:calc1}:
Since $g(x)\leq 0$ 
$\Leftrightarrow$
$f(x)\leq 0$, it follows that $C_g = C_f$.
Suppose that $f(x)>0$. 
Since $\alpha s_f(x)\in \partial g(x)$, we obtain 
$G_fx= x - f(x)\|s_f(x)\|^{-2}s_f(x)
=x -g(x)/\|\alpha s_f(x)\|^{-2}(\alpha s_f(x))$. This
implies $\GG_f(x)\subseteq \GG_g(x)$.

\ref{p:calc2}: Suppose that $g(x)>0$, i.e., $f(\alpha x)>0$.
Then
$\alpha^{-1}G_f(\alpha x) = 
\alpha^{-1}(\alpha x - f(\alpha x)\|s_f(\alpha x)\|^{-2}s_f(\alpha
x))
= x - \alpha^{-1}f(\alpha x)\|s_f(\alpha x)\|^{-2}s_f(\alpha x)
= x - f(\alpha x)\|\alpha s_f(\alpha x)\|^{-2}(\alpha s_f(\alpha x))
\in \GG_g(x)$. Hence $\alpha^{-1}\GG_f(\alpha x)\subseteq
\GG_g(x)$.

\ref{p:calc3}:
Suppose that $g(x)>0$.
Then $f(x)>0$ and 
$\alpha^{-1}(x-G_fx) = 
\alpha^{-1}f(x)/\|s_f(x)\|^2 s_f(x)
= f^\alpha(x)\|\alpha
f^{\alpha-1}(x)s_f(x)\|^{-2}\alpha
f^{\alpha-1}(x)s_f(x)\in x-\GG_g(x)$.

\ref{p:calc4}:
We have $x\in C_g$
$\Leftrightarrow$ 
$f(Ax)\leq 0$
$\Leftrightarrow$ 
$Ax\in C_f$
$\Leftrightarrow$ 
$x\in A^*C_f$.
Suppose that $g(x)>0$.
Then $f(Ax)>0$, $A^*s_f(Ax)\in\partial g(x)$ and 
$A^*G_f(Ax) =
A^*(Ax - f(Ax)\|s_f(Ax)\|^{-2}s_f(Ax))
= x - f(Ax)\|A^*s_f(Ax)\|^{-2}A^*s_f(Ax)
\in\GG_g(x)$. 

\ref{p:calc5}:
Suppose that $0<g(x)=f(x-z)$.
Then 
$z+G_f(x-z) = 
z+ x-z - f(x-z)\|s_f(x-z)\|^{-2}s_f(x-z)
\in \GG_g(x)$. 

\ref{p:calc6}:
This follows from the well known formula for the subdifferential
of a maximum; see, e.g., \cite[Proposition~3.38]{Penot}. 

\ref{p:calc7}: This follows from \ref{p:calc6} since $f^+ =
\max\{0,f\}$. 

\ref{p:calc8}:
This is clear because $g\geq 0$,
$\nabla g = \Id-\prox_f$
(see, e.g., \cite[Proposition~12.29]{BC2011}), and
$\argmin g = \argmin f$ (see, e.g., \cite[Corollary~17.5]{BC2011}). 
\end{proof}

\section{Examples}

\label{s:examples}

In this section, we present several illustrative examples. 

\begin{example}
Suppose that $f = \|\cdot\|^2$. Then $\nabla f = 2\Id$ and 
$G = \thalb\Id$. 
\end{example}

\begin{example}
\label{ex:huber}
Suppose that 
\begin{equation}
(\forall x\in X)\quad
f(x) = \begin{cases}
\thalb \|x\|^2, &\text{if $x\in\ball(0;1)$;}\\
\|x\|-\thalb, &\text{otherwise.}
\end{cases}
\end{equation}
Then $G = \thalb P_{\ball(0;1)}$ and $G$ is firmly nonexpansive.
\end{example}
\begin{proof}
Let $x\in X$.
Observe that $f = \|\cdot\|\Box(1/2)\|\cdot\|^2$ is
the Moreau envelope of the norm. 
Hence it follows from Proposition~\ref{p:calc}\ref{p:calc8} that
\begin{equation}
Gx = x - \frac{f(x)}{\|x-\prox_{\|\cdot\|}x\|^2}(x-\prox_{\|\cdot\|}x)
\end{equation}
provided that $x\neq 0$, and $Gx=0=\thalb P_{\ball(0;1)}x$ if $x=0$.
Furthermore, $\prox_{\|\cdot\|} =
\Id - \prox_{\|\cdot\|^*} =
\Id - \prox_{\iota_{\ball(0;1)}}
= \Id-P_{\ball(0;1)}$.
Thus, $\Id-\prox_{\|\cdot\|} = P_{\ball(0;1)}$. 
Assume now $x\neq 0$.
If $0<\|x\|\leq 1$, then 
\begin{equation}
Gx = x - \frac{\thalb\|x\|^2}{\|P_{\ball(0;1)}x\|^2}P_{\ball(0;1)}(x)
= x - \frac{\|x\|^2}{2\|x\|^2}x = \thalb x = \thalb P_{\ball(0;1)}x;
\end{equation}
and if $1<\|x\|$, then 
\begin{equation}
Gx = x - \frac{\|x\|-\thalb}{\|P_{\ball(0;1)}x\|^2}P_{\ball(0;1)}(x)
= x - \frac{\|x\|-\thalb}{\big\|x/\|x\|\big\|^2}\frac{x}{\|x\|} = \thalb
\frac{x}{\|x\|} = \thalb P_{\ball(0;1)}x.
\end{equation}
Now $P_{\ball(0;1)}$ is firmly nonexpansive, and hence so is
$\Id-P_{\ball(0;1)}$. It follows that 
$2G-\Id = -(\Id-P_{\ball(0;1)})$ is
nonexpansive, and therefore that $G$ is firmly nonexpansive. 
\end{proof}

\begin{proposition}
\label{p:maxdist}
Let $(C_i)_{i\in I}$ be a finite family of 
closed convex subsets of $X$ such that 
$C = \bigcap_{i\in I} C_i\neq\varnothing$ and
$f=\max_{i\in I} d_{C_i}$.
Let $x\in X\smallsetminus C$, set $I(x) = \menge{i\in I}{f(x)=d_{C_i}(x)}$,
and set $Q(x) = \conv\{P_{C_i}x\}_{i\in I(x)}$. 
Then 
\begin{equation}
\GG(x) = \bigcup_{q(x)\in Q(x)} \left\{x -
\frac{f^2(x)}{\|x-q(x)\|^2}\big(x-q(x)\big)\right\}
\quad\text{and}\quad
Q(x)\subseteq \conv\big(\{x\}\cup \GG(x)\big). 
\end{equation}
If $I(x)=\{i\}$ is a singleton, then $\GG(x)=\{P_{C_i}x\}$. 
\end{proposition}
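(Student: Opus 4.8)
The plan is to specialize the maximum rule, Proposition~\ref{p:calc}\ref{p:calc6}, to the family $(f_i)_{i\in I}=(d_{C_i})_{i\in I}$ and then simplify the resulting description. First I would observe that, since each $d_{C_i}\geq 0$, we have $f\geq 0$ with $f(y)=0\Leftrightarrow y\in\bigcap_{i\in I}C_i=C$; in particular $C_f=C$ and $f(x)>0$ for the given $x\in X\smallsetminus C$. For every $i\in I(x)$ we then have $d_{C_i}(x)=f(x)>0$, so $x\notin C_i$, and the standard fact that the distance to a nonempty closed convex set is \frechet\ differentiable off that set, with gradient $(x-P_{C_i}x)/d_{C_i}(x)$ (see, e.g., \cite{BC2011}), yields the singleton $\partial d_{C_i}(x)=\{(x-P_{C_i}x)/f(x)\}$. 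Pulling the common factor $1/f(x)$ and the point $x$ out of the convex combination, this gives the set identity
\begin{equation}
\conv\bigcup_{i\in I(x)}\partial d_{C_i}(x)
= \frac{1}{f(x)}\conv\menge{x-P_{C_i}x}{i\in I(x)}
= \frac{1}{f(x)}\big(x-Q(x)\big).
\end{equation}

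Next I would substitute this into the formula of Proposition~\ref{p:calc}\ref{p:calc6}. Writing a generic admissible subgradient as $x^{*}=(x-q(x))/f(x)$ with $q(x)\in Q(x)$, a one-line computation gives $x-f(x)\|x^{*}\|^{-2}x^{*}=x-f^{2}(x)\|x-q(x)\|^{-2}(x-q(x))$, which is exactly the claimed expression; letting $q(x)$ range over $Q(x)$ produces the first displayed equality (and it is a genuine equality, not merely an inclusion, because the max-rule of Proposition~\ref{p:calc}\ref{p:calc6} itself is stated as an equality). Here I must also verify that no denominator vanishes: since $C\neq\varnothing$ we have $0\notin\partial f(x)$ for $x\notin C$, and through the max-rule this says precisely that $0\notin\conv\bigcup_{i\in I(x)}\partial d_{C_i}(x)$, i.e.\ $x\notin Q(x)$, so $\|x-q(x)\|>0$ throughout.

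Finally, for the inclusion $Q(x)\subseteq\conv(\{x\}\cup\GG(x))$ I would fix $q(x)\in Q(x)$ and set $t=f^{2}(x)/\|x-q(x)\|^{2}$, so that the associated point of $\GG(x)$ is $Gx=(1-t)x+tq(x)$, whence $q(x)=\tfrac{1}{t}\,Gx+\big(1-\tfrac{1}{t}\big)x$. This exhibits $q(x)$ as a convex combination of $Gx\in\GG(x)$ and $x$ as soon as $t\geq 1$, and this estimate is the crux of the argument: writing $q(x)=\sum_{i\in I(x)}\lambda_i P_{C_i}x$ with $\lambda_i\geq 0$ and $\sum_i\lambda_i=1$, the triangle inequality gives $\|x-q(x)\|\leq\sum_{i}\lambda_i\|x-P_{C_i}x\|=\sum_i\lambda_i d_{C_i}(x)=f(x)$, so indeed $t\geq 1$. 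The singleton case is then immediate, for if $I(x)=\{i\}$ then $q(x)=P_{C_i}x$, $\|x-q(x)\|=d_{C_i}(x)=f(x)$, $t=1$, and $Gx=P_{C_i}x$. The only genuinely delicate points are this triangle-inequality estimate, which powers the second inclusion, and the verification that $x\notin Q(x)$, which keeps the main formula well defined; the remainder is routine bookkeeping.
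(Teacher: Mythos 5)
Your proposal is correct and follows essentially the same route as the paper, whose entire proof is the one-line remark that the result ``follows from Proposition~\ref{p:calc}\ref{p:calc6} and the fact that $\nabla d_{C_i}(x) = (x-P_{C_i}x)/d_{C_i}(x)$ when $x\in X\smallsetminus C_i$.'' Your write-up simply fills in the details the paper leaves to the reader --- notably the well-definedness check $x\notin Q(x)$ and the triangle-inequality estimate $\|x-q(x)\|\leq f(x)$ giving $t\geq 1$, which is exactly the right way to obtain the inclusion $Q(x)\subseteq\conv\big(\{x\}\cup\GG(x)\big)$.
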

\begin{proof}
This follows from Proposition~\ref{p:calc}\ref{p:calc6} and 
the fact that $\nabla d_{C_i}(x) = (x-P_{C_i}x)/d_{C_i}(x)$ when
$x\in X\smallsetminus C_i$. 
\end{proof}

\begin{proposition}
\label{p:hals}
Let $(C_i)_{i\in I}$ be a finite family of nonempty closed convex
subsets of $X$ such that $C = \bigcap_{i\in
I}C_i\neq\varnothing$.
Let $(\lambda_i)_{i\in I}$ be a family in $\left]0,1\right]$ such
that $\sum_{i\in I}\lambda_i = 1$.
Let $p\geq 1$ and suppose that
$f = \sum_{i\in I} \lambda_id_{C_i}^p$.
Set $(\forall x\in X)$
$I(x) = \menge{i\in I}{x\notin C_i}$.
Then 
\begin{equation}
(\forall x\in X)\quad
Gx = x - \frac{\sum_{i\in
I(x)}\lambda_id_{C_i}^p(x)}{p\big\|\sum_{i\in
I(x)}\lambda_id_{C_i}^{p-2}(x)(x-P_{C_i}x)\big\|^2}
\sum_{i\in I(x)}\lambda_i d_{C_i}^{p-2}(x)(x-P_{C_i}x)
\end{equation}
and if $p=2$, we rewrite this as 
\begin{equation}
(\forall x\in X)\quad
Gx = 
\begin{cases}
\displaystyle x - \frac{\sum_{i\in
I}\lambda_i\|x-P_{C_i}x\|^2}{2\big\|\sum_{i\in
I}\lambda_i(x-P_{C_i}x)\big\|^2}\Big(x-\sum_{i\in
I}\lambda_iP_{C_i}x\Big), &\text{if $x\notin C$;}\\
x, &\text{otherwise.}
\end{cases}
\end{equation}
\end{proposition}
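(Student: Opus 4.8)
The plan is to exhibit an explicit selection $s(x)\in\partial f(x)$, insert it into the defining formula $Gx = x - f^+(x)\|s(x)\|^{-2}s(x)$, and simplify. First I would record the elementary structural facts. Each $d_{C_i}^p$ is convex, being the composite of the increasing convex function $t\mapsto t^p$ on $\RP$ with the convex nonnegative function $d_{C_i}$; hence $f=\sum_{i\in I}\lambda_i d_{C_i}^p$ is convex and continuous, $f\geq 0$, and $f(x)=0\Leftrightarrow d_{C_i}(x)=0$ for every $i\Leftrightarrow x\in C$. Thus $C_f=C$ and $f^+=f$, so that $Gx=x$ on $C$ and only the case $x\notin C$ requires work.

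The heart of the proof is the subdifferential computation. By the sum rule $\partial f=\sum_{i\in I}\lambda_i\partial(d_{C_i}^p)$ (valid since each summand is finite and continuous), I would split $I$ at $x$ into $I(x)$ and its complement. For $i\in I(x)$ one has $d_{C_i}(x)>0$, where $d_{C_i}$ is differentiable with $\nabla d_{C_i}(x)=(x-P_{C_i}x)/d_{C_i}(x)$ (the fact already used in Proposition~\ref{p:maxdist}); the chain rule then yields $\nabla(d_{C_i}^p)(x)=p\,d_{C_i}^{p-1}(x)\nabla d_{C_i}(x)=p\,d_{C_i}^{p-2}(x)(x-P_{C_i}x)$. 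For $i\notin I(x)$ one has $x\in C_i=\argmin d_{C_i}^p$, so $0\in\partial(d_{C_i}^p)(x)$. Therefore
\[
s(x):=p\sum_{i\in I(x)}\lambda_i d_{C_i}^{p-2}(x)(x-P_{C_i}x)
\]
is a bona fide selection of $\partial f(x)$, the discarded indices contributing the admissible subgradient $0$; moreover $s(x)\neq 0$ for $x\notin C$ because $0\notin\partial f(X\smallsetminus C)$.

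It then remains to substitute. Since $f(x)=\sum_{i\in I(x)}\lambda_i d_{C_i}^p(x)$ (the terms $i\notin I(x)$ vanish) and $s(x)=p\,v(x)$ with $v(x):=\sum_{i\in I(x)}\lambda_i d_{C_i}^{p-2}(x)(x-P_{C_i}x)$, one power of $p$ cancels against $\|p\,v(x)\|^2=p^2\|v(x)\|^2$, leaving precisely the displayed formula for general $p$. For $p=2$ I would note that $d_{C_i}^{p-2}\equiv 1$ and $d_{C_i}^p(x)=\|x-P_{C_i}x\|^2$, and that the terms with $i\notin I(x)$ now vanish automatically (since then $x-P_{C_i}x=0$), so the sums may be taken over all of $I$; finally $\sum_{i\in I}\lambda_i=1$ turns $\sum_{i\in I}\lambda_i(x-P_{C_i}x)$ into $x-\sum_{i\in I}\lambda_iP_{C_i}x$, giving the rewritten expression.

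The one delicate point is the treatment of the indices $i\notin I(x)$, i.e.\ of points lying in some $C_i$ but outside the global intersection $C$. For $p>1$ this is harmless, since $\partial(d_{C_i}^p)(x)=\{0\}$ there (the function vanishes to order greater than one at its minimizer) and $f$ is in fact differentiable off $C$, so $G$ is unambiguous. For $p=1$, however, $\partial d_{C_i}(x)$ at a boundary point of $C_i$ is strictly larger than $\{0\}$, so the asserted identity holds for the natural selection that assigns those boundary indices the subgradient $0$. Making this selection explicit, and justifying $0\in\partial(d_{C_i}^p)(x)$ through the minimizer observation, is the step that must be phrased with care.
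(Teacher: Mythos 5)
Your proof is correct and takes essentially the same approach as the paper's, which is a much terser version of the same computation: $\nabla d_{C_i}(x) = d_{C_i}^{-1}(x)(x-P_{C_i}x)$ off $C_i$, $0\in\partial d_{C_i}^p(x)$ on $C_i$, then substitution of the resulting selection into the definition of $G$. Your closing observation that for $p=1$ the asserted formula is tied to the particular selection assigning the subgradient $0$ to indices with $x\in C_i$ (whereas $\partial d_{C_i}(x)$ is strictly larger at boundary points) is a careful refinement of a point the paper's proof passes over in silence.
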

\begin{proof}
Let $x\in X$, and let $i\in I$.
Then
$\nabla d_{C_i}(x) = d_C^{-1}(x)(x-P_{C_i}x)$ if $x\notin C_i$
and $0\in\partial d_{C_i}(x)$ otherwise.
Hence 
\begin{equation}
\nabla d_{C_i}^p(x) = 
pd_{C_i}^{p-2}(x)(x-P_{C_i}x)
\end{equation}
if $x\notin C_i$, and $0\in\partial  d_{C_i}^p(x)$ otherwise. 
The result follows.
\end{proof}

\begin{example}
\label{ex:dCp}
Let $p\geq 1$ and suppose that $f=d_C^p$.
Then $G = (1-\tfrac{1}{p})\Id + \tfrac{1}{p}P_C$.
\end{example}
\begin{proof}
This follows from Proposition~\ref{p:hals} when
$I$ is a singleton.
\end{proof}

\begin{example}
\label{ex:linear}
Suppose that $u\in X$ satisfies $\|u\|=1$,
and let $\beta\in\RR$.
Then the following hold:
\begin{enumerate}
\item
\label{ex:linear1}
If $f\colon x\mapsto \scal{u}{x}-\beta$,
then $C = \menge{x\in X}{\scal{u}{x}\leq\beta}$ and
$G\colon x\mapsto x- (\scal{u}{x}-\beta)^+u$.
\item
\label{ex:linear2}
If $f\colon x\mapsto |\scal{u}{x}-\beta|$,
then $C = \menge{x\in X}{\scal{u}{x}=\beta}$ and
$G\colon x\mapsto x- (\scal{u}{x}-\beta)u$.
\end{enumerate}
\end{example}
\begin{proof}
\ref{ex:linear1}:
Note that $f^+ = d_C$ and hence $G=P_C$ by 
Proposition~\ref{p:calc}\ref{p:calc7} and Example~\ref{ex:dCp}.
\ref{ex:linear2}:
Here $f=d_C$ and hence $G=P_C$ by Example~\ref{ex:dCp}.
\end{proof}

\begin{remark}
Using Example~\ref{ex:dCp}, we see that
$G$ is linear and that $G=G^*$ provided that
$f = d_C^p$, where $p\geq 1$ and $C$ is a subspace.
The converse is true as well but this lies beyond the scope of
this paper. 
\end{remark}

We now give two examples in which $G$ is positively homogenenous but 
not necessarily linear.

\begin{example}
Suppose that $f$ is a norm on $X$, with duality mapping
$J=\partial \tfrac{1}{2}f^2$. 
Then $C=\{0\}$ and
$(\forall x\in X\smallsetminus\{0\})$
$Gx = x - f^2(x)\|Jx\|^{-2}Jx$. 
\end{example}

\begin{example}
Let $K$ be a nonempty closed convex cone with polar
cone $K^\ominus$, and suppose that $f \colon
x\mapsto \thalb\scal{x}{P_Kx}$.
Then $G = \Id-\tfrac{1}{2}P_K = P_{K^\ominus}+\tfrac{1}{2}P_K$.
\end{example}
\begin{proof}
Since $(\forall x\in X)$
$f(x) = \thalb\|P_Kx\|^2 = \thalb d^2_{K^\ominus}(x)$,
it follows that $\nabla f(x) = x-P_{K^\ominus}x = P_Kx$.
The formula then follows.
\end{proof}

A direct verification yields the following result which
is well known when $p=2$. 

\begin{proposition}
Let $Y$ be another real Hilbert space,
let $A\colon X\to Y$ be continuous and linear,
let $b\in Y$, and let $\varepsilon\geq 0$,
and let $p\geq 1$. 
Suppose that $(\forall x\in X)$
$f(x) = \|Ax-b\|^p-\varepsilon^p$ and that
$C = \menge{x\in X}{\|Ax-b\|\leq\varepsilon}\neq\varnothing$.
Then
\begin{equation}
(\forall x\in X)\quad
Gx = \begin{cases}
\displaystyle x- \frac{\|Ax-b\|^p
-\varepsilon^p}{p\|Ax-b\|^{p-2}\|A^*(Ax-b)\|^2}A^*(Ax-b),
&\text{if $\|Ax-b\|>\varepsilon$;}\\
x, &\text{otherwise.}
\end{cases}
\end{equation}
\end{proposition}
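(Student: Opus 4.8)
The plan is to verify the formula by direct substitution into the definition of $G$. First I would settle the case split: since $p\geq 1$ and both sides are nonnegative, $f(x)=\|Ax-b\|^p-\varepsilon^p>0$ holds if and only if $\|Ax-b\|>\varepsilon$, equivalently $x\notin C$. Thus the branch $Gx=x$ is precisely the branch $x\in C=\Fix G$ (consistent with Fact~\ref{f:known}\ref{f:known1}), and it remains to treat the points with $\|Ax-b\|>\varepsilon$.

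Next I would compute a subgradient selection on $X\smallsetminus C$. Writing $f=g\circ(A(\cdot)-b)-\varepsilon^p$ with $g=\|\cdot\|^p$, note that $\|Ax-b\|>\varepsilon\geq 0$ forces $Ax-b\neq 0$, so $g$ is \frechet\ differentiable at $Ax-b$ with $\nabla g(y)=p\|y\|^{p-2}y$ (this remains valid at the endpoint $p=1$ precisely because we stay away from the origin). The chain rule then yields
\[
s(x)=\nabla f(x)=p\|Ax-b\|^{p-2}A^*(Ax-b).
\]

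Finally I would substitute into $Gx=x-f(x)\|s(x)\|^{-2}s(x)$. Computing $\|s(x)\|^2=p^2\|Ax-b\|^{2p-4}\|A^*(Ax-b)\|^2$ and collecting the scalar factors collapses the coefficient of $A^*(Ax-b)$ to $\big(\|Ax-b\|^p-\varepsilon^p\big)\big/\big(p\|Ax-b\|^{p-2}\|A^*(Ax-b)\|^2\big)$, which is exactly the asserted expression.

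As the paper's preamble indicates, this is a direct verification, so there is no serious obstacle; the only points requiring care are the differentiation of $y\mapsto\|y\|^p$ through the norm (and the bookkeeping of the exponents $p-2$ versus $2p-4$), together with confirming that the formula is well defined. The latter is automatic: since $C\neq\varnothing$, the operator $G$ is well defined and $0\notin\partial f(X\smallsetminus C)$, whence $s(x)\neq 0$ and therefore $A^*(Ax-b)\neq 0$ for every $x$ with $\|Ax-b\|>\varepsilon$, so the denominator never vanishes.
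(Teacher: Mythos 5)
Your proposal is correct and follows exactly the route the paper intends: the paper gives no proof at all, stating only that ``a direct verification yields the result,'' and your computation (the equivalence $f(x)>0\Leftrightarrow\|Ax-b\|>\varepsilon$, the chain-rule gradient $\nabla f(x)=p\|Ax-b\|^{p-2}A^*(Ax-b)$ valid also at $p=1$ since $Ax-b\neq 0$, and the substitution into the definition of $G$) is precisely that verification. Your added remark on well-definedness --- $0\notin\partial f(X\smallsetminus C)$ because $C\neq\varnothing$, hence $A^*(Ax-b)\neq 0$ outside $C$ --- is a correct and worthwhile detail that the paper leaves implicit.
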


\section{Continuity of $G$ vs \frechet\ differentiability of $f$} 

\label{s:contF}

We start with a technical result.

\begin{lemma}
\label{l:awards}
Let $(x_n)_\nnn$ be a sequence in $X$ converging
weakly to $\bar{x}$ and such that $x_n-Gx_n\to 0$. 
Suppose that one of the following holds:
\begin{enumerate}
\item 
\label{l:awards1}
$x_n\to\bar{x}$.
\item
$f$ is bounded on every bounded subset of $X$.
\end{enumerate}
Then $\bar{x}\in C$. 
\end{lemma}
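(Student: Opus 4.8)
The plan is to reduce both cases to a single mechanism driven by Fact~\ref{f:known}\ref{f:known4}, namely the identity $f^+(x_n)=\|s(x_n)\|\,\|x_n-Gx_n\|$. Since we are given $x_n-Gx_n\to 0$, it suffices to establish a uniform bound $\|s(x_n)\|\leq L$ along the sequence: the product then forces $f^+(x_n)\to 0$, and it remains only to transfer this information to the weak limit $\bar{x}$. The two hypotheses serve precisely these two purposes, so I would treat the subgradient bound and the limit passage as the two recurring ingredients.

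First I would handle \ref{l:awards1}, the strongly convergent case $x_n\to\bar{x}$. Because $f$ is convex and (finite and) continuous on all of $X$, it is locally Lipschitz; hence on a closed ball about $\bar{x}$ containing a tail of $(x_n)$, every selected subgradient satisfies $\|s(x_n)\|\leq L$ for the local Lipschitz constant $L$. Combined with $\|x_n-Gx_n\|\to 0$, the identity above yields $f^+(x_n)\to 0$. Since $f^+$ is continuous and $x_n\to\bar{x}$, passing to the limit gives $f^+(\bar{x})=0$, i.e.\ $f(\bar{x})\leq 0$, so $\bar{x}\in C=\Fix G$ as desired.

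For the second case I would exploit that a weakly convergent sequence is bounded, say $\{x_n\}\subseteq B$ for some ball $B$. The assumption that $f$ is bounded on bounded subsets makes $f$ bounded above (and below) on a slightly larger ball, and the standard convex-analytic fact that a convex function bounded above on a ball is Lipschitz on its interior again produces a constant $L$ with $\|s(x_n)\|\leq L$. As before $f^+(x_n)\to 0$, whence $\limsup_n f(x_n)\leq 0$. Finally, since $f$ is convex and continuous it is weakly lower semicontinuous, so $f(\bar{x})\leq\liminf_n f(x_n)\leq 0$, giving $\bar{x}\in C$.

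The main obstacle in both parts is exactly the uniform bound on the selected subgradients $s(x_n)$; once that is secured the product identity and a limit argument do the rest. In \ref{l:awards1} this bound rests on local boundedness of $\partial f$ near $\bar{x}$, a consequence of continuity, whereas in the weak case it requires the passage from \emph{bounded on bounded sets} to \emph{Lipschitz on bounded sets}, which is where I expect the only genuine subtlety to lie. I would also note in passing that indices with $x_n\in C$ cause no trouble, since then both sides of the identity vanish, so no separate bookkeeping is needed.
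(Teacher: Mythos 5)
Your proposal is correct and follows essentially the same route as the paper: a uniform bound on $\|s(x_n)\|$ (local boundedness of $\partial f$ in case (i), boundedness of the weakly convergent sequence plus boundedness of $f$ on bounded sets in case (ii)), combined with the identity $f^+(x_n)=\|s(x_n)\|\,\|x_n-Gx_n\|$ from Fact~\ref{f:known}\ref{f:known4} and lower semicontinuity to pass to the limit. The paper merely treats both cases at once by invoking weak lower semicontinuity of $f^+$, where you split into continuity of $f^+$ (strong case) and weak lower semicontinuity of $f$ (weak case) --- a cosmetic difference.
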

\begin{proof}
Because of either \cite[Proposition~16.14]{BC2011} or \cite[Proposition~16.17]{BC2011}
there exists $\rho>0$ such that $\sigma := \sup\|\partial
f(\ball(\bar{x};\rho))\| < \pinf$. 
We thus can and do assume that 
\begin{equation}
(\forall\nnn)\quad \|s(x_n)\|\leq \sigma.
\end{equation}
Since $f^+$ is weakly lower semicontinuous, we deduce 
from Fact~\ref{f:known}\ref{f:known4} that
\begin{equation}
f^+(\bar{x}) \leq\varliminf f^+(x_n)
\leq \sigma\varliminf \|x_n-Gx_n\|= 0.
\end{equation}
Hence $f(\bar{x})\leq 0$, i.e., $\bar{x}\in C$. 
\end{proof}

\begin{remark}
Lemma~\ref{l:awards}\ref{l:awards1} and
Fact~\ref{f:known}\ref{f:known1} imply that $G$ is 
\emph{fixed-point closed} at $\bar{x}$
(see, e.g., also \cite[Theorem~4.2.7]{Cegielski} or \cite{BCW1}), i.e., 
if $x_n\to\bar{x}$ and $x_n-Gx_n\to 0$, then $\bar{x}=G\bar{x}$.
\end{remark}

\begin{proposition}
\label{p:couleen}
$G$ is continuous at every point in $C$.
\end{proposition}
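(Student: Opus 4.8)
The plan is to reduce everything to the quasi–firm-nonexpansiveness already recorded in Fact~\ref{f:known}\ref{f:known3+}, so that no delicate estimate of the quotient $f^+(x)/\|s(x)\|$ is needed. First I would fix a point $c\in C$ at which continuity is to be established. By Fact~\ref{f:known}\ref{f:known1} we have $C=\Fix G$, so $Gc=c$; this pins down the target value and reduces the task to showing $Gx\to c$ as $x\to c$.

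Next I would take an arbitrary $x\in X$ and apply Fact~\ref{f:known}\ref{f:known3+} with this particular $c$, obtaining
\[
\|x-Gx\|^2 + \|Gx-c\|^2 \leq \|x-c\|^2 .
\]
Discarding the nonnegative term $\|x-Gx\|^2$ yields the pointwise estimate $\|Gx-c\|\leq\|x-c\|$, that is, $\|Gx-Gc\|\leq\|x-c\|$, valid for \emph{all} $x\in X$ with no case distinction on the sign of $f(x)$.

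From here continuity is immediate: given $\ve>0$, the choice $\delta=\ve$ forces $\|Gx-Gc\|<\ve$ whenever $\|x-c\|<\delta$, so $G$ is continuous at $c$. Since $c\in C$ was arbitrary, $G$ is continuous at every point of $C$, which is the claim.

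The only place one could go wrong is to attempt a direct argument instead: as $x\to c$, continuity of $f$ gives $f^+(x)\to f^+(c)=0$, and $\|x-Gx\|=f^+(x)/\|s(x)\|$ by Fact~\ref{f:known}\ref{f:known4}, but the denominator $\|s(x)\|$ may degenerate to $0$ and in general cannot be bounded below near the boundary of $C$, making this route awkward. Routing through Fact~\ref{f:known}\ref{f:known3+} (equivalently, using $\|x-Gx\|=d_H(x)\le d_C(x)$, which holds because $Gx=P_Hx$ and $C\subseteq H$ by Fact~\ref{f:known}\ref{f:known1}) sidesteps that difficulty entirely; recognizing that this already-proved inequality does all the work is the main point, and once it is invoked there is no genuine obstacle.
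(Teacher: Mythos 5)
Your proof is correct, but it takes a genuinely different route from the paper's. The paper argues sequentially: for $\bar{x}\in C$ and $x_n\to\bar{x}$ with $x_n\notin C$, it applies the subgradient inequality at $\bar{x}$ to get $0<f(x_n)\leq\scal{s(x_n)}{x_n-\bar{x}}\leq\|s(x_n)\|\,\|x_n-\bar{x}\|$, hence $f(x_n)/\|s(x_n)\|\leq\|x_n-\bar{x}\|\to 0$, which by Fact~\ref{f:known}\ref{f:known4} says exactly $\|x_n-Gx_n\|\to 0$ and therefore $Gx_n\to\bar{x}=G\bar{x}$. This is precisely the ``direct argument'' you dismissed as awkward: the subgradient inequality controls the ratio $f^+(x)/\|s(x)\|$ with no lower bound on $\|s(x)\|$ whatsoever, so the degenerating denominator is not an obstruction there. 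Your route instead invokes the Fej\'er-type inequality of Fact~\ref{f:known}\ref{f:known3+}, i.e.\ quasi-nonexpansiveness of $G$ with respect to $C=\Fix G$ (Fact~\ref{f:known}\ref{f:known1}), discards the term $\|x-Gx\|^2$, and gets the pointwise estimate $\|Gx-Gc\|\leq\|x-c\|$ for all $x\in X$; this is an instance of the general principle that a quasi-nonexpansive mapping is continuous at each of its fixed points, and there is no circularity since Fact~\ref{f:known}\ref{f:known3+} is derived from the projection interpretation $Gx=P_Hx$, independently of this proposition. Comparing the two: your argument is shorter and yields a sharper quantitative conclusion (pointwise Lipschitz constant $1$ at points of $C$, versus the constant $2$ implicit in the paper's estimate $\|Gx_n-\bar{x}\|\leq\|x_n-Gx_n\|+\|x_n-\bar{x}\|\leq 2\|x_n-\bar{x}\|$), while the paper's argument produces the displacement information $x_n-Gx_n\to 0$, which is the quantity that matters in the neighbouring results on fixed-point closedness (cf.\ Lemma~\ref{l:awards}).
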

\begin{proof}
Let $\bar{x}\in C$, and let $(x_n)_\nnn$ be a sequence in $X$
converging to $\bar{x}$. The result is clear if $(x_n)_\nnn$ lies in $C$,
so we can and do assume that $(x_n)_\nnn$ lies in $X\smallsetminus C$. 
Then $(\forall\nnn)$
$f(x_n) \leq f(\bar{x})-\scal{s(x_n)}{\bar{x}-x_n} \leq
\scal{s(x_n)}{x_n-\bar{x}} \leq \|s(x_n)\|\|\bar{x}-x_n\|$.
Hence 
$0 < f(x_n)/\|s(x_n)\|\leq \|\bar{x}-x_n\|\to 0$. 
By Fact~\ref{f:known}\ref{f:known4},
$x_n-Gx_n\to 0$. 
Thus $\lim Gx_n = \lim x_n = \bar{x}=G\bar{x}$ using
Fact~\ref{f:known}\ref{f:known1}. 
\end{proof}

The continuity of $G$ outside $C$ is more delicate.

\begin{fact}[Smulyan]
{\rm (See, e.g., \cite[Proposition~6.1.4]{BV}.)}
\label{f:nabla}
The following hold:
\begin{enumerate}
\item
\label{f:nablaF}
$f$ is \frechet\ differentiable at $\bar{x}$
$\Leftrightarrow$ 
$s$ is (strong-to-strong) continuous at $\bar{x}$.
\item
\label{f:nablaG}
$f$ is \gateaux\ differentiable at $\bar{x}$
$\Leftrightarrow$ 
$s$ is strong-to-weak continuous at $\bar{x}$.
\end{enumerate}
\end{fact}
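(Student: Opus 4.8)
The plan is to prove both equivalences by comparing the one-sided directional derivative $f'(\bar{x};\cdot)$ with the selected subgradient $s(\bar{x})$, using the local boundedness and weak upper semicontinuity of $\partial f$ already invoked in Lemma~\ref{l:awards}. I would dispose of \ref{f:nablaG} first. For the implication ``$s$ strong-to-weak continuous $\Rightarrow$ $f$ \gateaux\ differentiable'', fix a direction $d$ and sandwich the difference quotient by the two subgradient inequalities at $\bar{x}$ and at $\bar{x}+td$: convexity gives $\scal{s(\bar{x})}{d}\le t^{-1}\big(f(\bar{x}+td)-f(\bar{x})\big)\le\scal{s(\bar{x}+td)}{d}$ for $t>0$. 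Letting $t\downarrow 0$, the left and middle terms converge to $f'(\bar{x};d)\ge\scal{s(\bar{x})}{d}$, while strong-to-weak continuity forces $\scal{s(\bar{x}+td)}{d}\to\scal{s(\bar{x})}{d}$; hence $f'(\bar{x};d)=\scal{s(\bar{x})}{d}$ for every $d$, and linearity of $d\mapsto f'(\bar{x};d)$ is exactly \gateaux\ differentiability with $\nabla f(\bar{x})=s(\bar{x})$. Conversely, if $f$ is \gateaux\ differentiable then $\partial f(\bar{x})=\{\nabla f(\bar{x})\}$; for $x_n\to\bar{x}$, local boundedness keeps $(s(x_n))_\nnn$ bounded, and passing to the limit in the subgradient inequality shows every weak cluster point of $(s(x_n))_\nnn$ lies in the singleton $\partial f(\bar{x})$, so $s(x_n)\weakly\nabla f(\bar{x})=s(\bar{x})$.

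For \ref{f:nablaF}, the easy implication is ``$s$ strong-to-strong continuous $\Rightarrow$ $f$ \frechet\ differentiable''. Strong-to-strong continuity implies strong-to-weak continuity, so $f$ is already \gateaux\ differentiable with $\nabla f(\bar{x})=s(\bar{x})$ by \ref{f:nablaG}. The two subgradient inequalities at $\bar{x}$ and $\bar{x}+h$ then give
\begin{equation}
0\le f(\bar{x}+h)-f(\bar{x})-\scal{s(\bar{x})}{h}\le\scal{s(\bar{x}+h)-s(\bar{x})}{h}\le\|s(\bar{x}+h)-s(\bar{x})\|\,\|h\|.
\end{equation}
Dividing by $\|h\|$ and letting $\|h\|\to 0$, strong continuity of $s$ drives the quotient to $0$, which is precisely \frechet\ differentiability at $\bar{x}$.

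The main obstacle is the remaining implication ``$f$ \frechet\ differentiable $\Rightarrow$ $s$ strong-to-strong continuous'', because one must produce genuine norm convergence of $(s(x_n))_\nnn$ rather than the mere weak convergence that \ref{f:nablaG} already supplies. I would argue by contradiction: suppose $x_n\to\bar{x}$ but $\|u_n\|\ge\varepsilon$ along a subsequence, where $u_n:=s(x_n)-s(\bar{x})$ and $s(\bar{x})=\nabla f(\bar{x})$. Write $x_n=\bar{x}+\delta_n$ with $\delta_n\to 0$ and set $w_n:=u_n/\|u_n\|$. The subgradient inequality at $x_n$ yields $f(x_n+tw_n)-f(x_n)\ge t\scal{s(\bar{x})}{w_n}+t\|u_n\|$, while the \frechet\ estimate at $\bar{x}$ (with tolerance $\varepsilon'$ on a ball of radius $\delta$), applied at $\bar{x}+\delta_n+tw_n$ together with the lower bound $f(x_n)\ge f(\bar{x})+\scal{s(\bar{x})}{\delta_n}$, bounds the same difference above by $t\scal{s(\bar{x})}{w_n}+\varepsilon'(\|\delta_n\|+t)$ once $t<\delta/2$ and $n$ is large. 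Comparing gives $\|u_n\|\le\varepsilon'(1+\|\delta_n\|/t)$; fixing $t$ and sending $n\to\infty$ yields $\limsup\|u_n\|\le\varepsilon'$, so that $u_n\to 0$ since $\varepsilon'$ is arbitrary --- a contradiction. In a Hilbert space one could alternatively finish from \ref{f:nablaG} by showing $\|s(x_n)\|\to\|s(\bar{x})\|$, but the direct $\varepsilon$-$\delta$ estimate is cleaner and is the crux of the whole Fact; in the interest of brevity one may instead simply cite the norm-to-norm upper semicontinuity of $\partial f$ at points of \frechet\ differentiability.
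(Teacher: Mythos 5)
The paper does not actually prove this Fact: it is quoted as known, with a pointer to \cite[Proposition~6.1.4]{BV}, so there is no internal proof to compare against, and your task was really to reconstruct the classical Smulyan-type argument --- which you do correctly and in a self-contained way for the Hilbert setting. The squeeze between the two subgradient inequalities settles both easy directions; the cluster-point argument (local boundedness of $\partial f$, weak compactness of bounded sets, and $\partial f(\bar{x})=\{\nabla f(\bar{x})\}$ under \gateaux\ differentiability) gives strong-to-weak continuity of the selection; and the $\varepsilon$-$\delta$ estimate in the \frechet\ direction --- playing the subgradient inequality at $x_n$ in the direction $w_n=u_n/\|u_n\|$ against the \frechet\ expansion at $\bar{x}$ --- is exactly the crux, and you carry it out with the quantifiers in the right order ($t$ fixed in terms of $\delta(\varepsilon')$ before letting $n\to\infty$). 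Two cosmetic remarks. First, the contradiction framing in that last step is unnecessary: the inequality $\|u_n\|\leq\varepsilon'\big(1+\|\delta_n\|/t\big)$ already yields $\limsup_n\|u_n\|\leq\varepsilon'$ for every $\varepsilon'>0$, hence $s(x_n)\to s(\bar{x})$ directly. Second, your argument correctly proves the statement for an \emph{arbitrary} selection $s$ of $\partial f$, which is what the Fact asserts; note that the weak-compactness step is where the Hilbert (more generally, reflexive) structure enters, whereas the cited source works with the weak$^*$ topology in general Banach spaces. In short: what your write-up buys over the paper's citation is self-containedness; what the citation buys is generality and brevity.
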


\begin{lemma}
\label{l:131029b}
Suppose that $\bar{x}\in X\smallsetminus C$,
that $G$ is strong-to-weak continuous at $\bar{x}$,
but $G$ is not strong-to-strong continuous at $\bar{x}$.
Then $f$ is not \gateaux\ differentiable at $\bar{x}$. 
\end{lemma}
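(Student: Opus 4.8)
The plan is to establish the contrapositive: keeping the hypotheses $\bar{x}\in X\smallsetminus C$ and that $G$ is strong-to-weak continuous at $\bar{x}$, I would assume in addition that $f$ \emph{is} \gateaux\ differentiable at $\bar{x}$ and derive that $G$ is then strong-to-strong continuous at $\bar{x}$, contradicting the remaining hypothesis. Fix an arbitrary sequence $x_n\to\bar{x}$. Since $\bar{x}\notin C$ we have $f(\bar{x})>0$, and by continuity $f(x_n)>0$ for all large $n$; hence $s(x_n)\neq 0$, $Gx_n = x_n-\lambda_n s(x_n)$ with $\lambda_n := f(x_n)/\|s(x_n)\|^2>0$, and likewise $G\bar{x}=\bar{x}-\bar{\lambda}s(\bar{x})$ with $\bar{\lambda}:=f(\bar{x})/\|s(\bar{x})\|^2$ and $s(\bar{x})\neq 0$. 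The goal reduces to showing $Gx_n\to G\bar{x}$ strongly, and the crux is upgrading the available \emph{weak} information about $s(x_n)$ to strong convergence.

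First I would record two weak limits. By \gateaux\ differentiability and the Smulyan characterization Fact~\ref{f:nabla}\ref{f:nablaG}, $s$ is strong-to-weak continuous at $\bar{x}$, so $s(x_n)\weakly s(\bar{x})$. By the strong-to-weak continuity of $G$ together with $x_n\to\bar{x}$, I get $x_n-Gx_n\weakly \bar{x}-G\bar{x}$, that is, $\lambda_n s(x_n)\weakly \bar{\lambda}s(\bar{x})$. Testing both weak limits against the fixed vector $s(\bar{x})$ gives $\scal{s(x_n)}{s(\bar{x})}\to\|s(\bar{x})\|^2$ and $\lambda_n\scal{s(x_n)}{s(\bar{x})}\to\bar{\lambda}\|s(\bar{x})\|^2$. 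Since $\|s(\bar{x})\|^2>0$, the inner products are eventually bounded away from $0$, so dividing yields $\lambda_n\to\bar{\lambda}$. Consequently $\|s(x_n)\|^2 = f(x_n)/\lambda_n \to f(\bar{x})/\bar{\lambda} = \|s(\bar{x})\|^2$.

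At this point $s(x_n)\weakly s(\bar{x})$ and $\|s(x_n)\|\to\|s(\bar{x})\|$, so the Radon--Riesz (Kadec--Klee) property of the Hilbert space $X$ promotes this to strong convergence $s(x_n)\to s(\bar{x})$. Combining $x_n\to\bar{x}$, $\lambda_n\to\bar{\lambda}$, and $s(x_n)\to s(\bar{x})$, I conclude $Gx_n = x_n-\lambda_n s(x_n)\to \bar{x}-\bar{\lambda}s(\bar{x})=G\bar{x}$ strongly. As $(x_n)_\nnn$ was arbitrary, $G$ is strong-to-strong continuous at $\bar{x}$; this contradicts the hypothesis, so $f$ cannot be \gateaux\ differentiable at $\bar{x}$.

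The one genuine obstacle is extracting the norm convergence $\|s(x_n)\|\to\|s(\bar{x})\|$ from purely weak data: weak lower semicontinuity of the norm only supplies $\|s(\bar{x})\|\leq\varliminf\|s(x_n)\|$, and a priori the multipliers $\lambda_n$ need not converge. The device that resolves this is to pair \emph{both} weak limits against the same test vector $s(\bar{x})$ and divide, which forces $\lambda_n\to\bar{\lambda}$ and thereby pins down the norms; Radon--Riesz then does the rest. As a byproduct the argument in fact yields $s(x_n)\to s(\bar{x})$, so by Fact~\ref{f:nabla}\ref{f:nablaF} the function $f$ is even \frechet\ differentiable at $\bar{x}$, although only strong-to-strong continuity of $G$ is needed here.
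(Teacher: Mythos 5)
Your proof is correct, but it runs the argument in the opposite direction from the paper's, so the two are worth contrasting. The paper works directly with a witness sequence for the failure of strong-to-strong continuity: from $x_n-Gx_n\weakly\bar{x}-G\bar{x}$ together with $x_n-Gx_n\not\to\bar{x}-G\bar{x}$ it invokes Kadec--Klee \emph{negatively} to force $\|x_n-Gx_n\|\not\to\|\bar{x}-G\bar{x}\|$, passes to a subsequence with $\lim\|x_n-Gx_n\|=\eta>\|\bar{x}-G\bar{x}\|$ (weak lower semicontinuity of the norm), and then reads the identity of Fact~\ref{f:known}\ref{f:known5} as $s(x)=f(x)(x-Gx)/\|x-Gx\|^2$ to conclude that $s(x_n)$ converges weakly to $f(\bar{x})(\bar{x}-G\bar{x})/\eta^2\neq s(\bar{x})$; Smulyan (Fact~\ref{f:nabla}\ref{f:nablaG}) then rules out \gateaux\ differentiability. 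You instead prove the contrapositive, reading the same identity in the other direction, $Gx=x-\lambda(x)s(x)$: Smulyan gives $s(x_n)\weakly s(\bar{x})$, strong-to-weak continuity of $G$ gives $\lambda_n s(x_n)\weakly\bar{\lambda}s(\bar{x})$, pairing both weak limits against the fixed test vector $s(\bar{x})$ forces $\lambda_n\to\bar{\lambda}$ (legitimate, since the denominators tend to $\|s(\bar{x})\|^2>0$), and Radon--Riesz is then used \emph{positively} to upgrade $s(x_n)\weakly s(\bar{x})$ to strong convergence, whence $Gx_n\to G\bar{x}$. Both arguments rest on Smulyan plus the Kadec--Klee property of Hilbert space, but in opposite modes, and your multiplier trick replaces the paper's norm-gap subsequence extraction. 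What your route buys: it establishes the implication \ref{c:131029c3}$\Rightarrow$\ref{c:131029c2} of Theorem~\ref{c:131029c} directly rather than by contradiction, and your byproduct $s(x_n)\to s(\bar{x})$ even gives \ref{c:131029c3}$\Rightarrow$\ref{c:131029c1} via Fact~\ref{f:nabla}\ref{f:nablaF} without a detour through \ref{c:131029c2}. What the paper's route buys: it only needs to analyze $s$ along a single bad sequence rather than along arbitrary sequences, and it isolates exactly where the failure of \gateaux\ differentiability shows up, namely in the discrepancy between $\eta$ and $\|\bar{x}-G\bar{x}\|$.
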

\begin{proof}
There exists a sequence 
$(x_n)_\nnn$ in $X\smallsetminus C$ such that
$x_n\to \bar{x}$, 
$Gx_n\weakly G\bar{x}$ yet 
$Gx_n\not\to G\bar{x}$.
It follows that 
\begin{equation}
x_n-Gx_n\weakly \bar{x}-G\bar{x}
\quad\text{and}\quad
x_n-Gx_n\not\to \bar{x}-G\bar{x}.
\end{equation}
By Kadec--Klee,
$\|x_n-G_n\| \not\to \|\bar{x}-G\bar{x}\|$.
Since $\|\cdot\|$ is weakly lower semicontinuous, 
we assume (after passing to a subsequence and relabeling if necessary) 
that
\begin{equation}
\|\bar{x}-G\bar{x}\| < \eta := \lim_\nnn \|x_n-Gx_n\|.
\end{equation}
Using Fact~\ref{f:known}\ref{f:known5}, it follows that
\begin{align}
s(x_n) &= f(x_n)\frac{x_n-Gx_n}{\|x_n-Gx_n\|^2}
\weakly f(\bar{x})\frac{\bar{x}-G\bar{x}}{\eta^2}
\neq f(\bar{x})\frac{\bar{x}-G\bar{x}}{\|\bar{x}-G\bar{x}\|^2} 
= s(\bar{x}).
\end{align}
Thus, $s$ is not strong-to-weak continuous at $\bar{x}$.
It follows now from Fact~\ref{f:nabla}\ref{f:nablaG} 
that $f$ is not \gateaux\ differentiable at $\bar{x}$. 
\end{proof}

\begin{theorem}
\label{c:131029c}
Let $\bar{x}\in X\smallsetminus C$. Then the following are equivalent:
\begin{enumerate}
\item
\label{c:131029c1}
$f$ is \frechet\ differentiable at $\bar{x}$.
\item
\label{c:131029c2}
$G$ is (strong-to-strong) continuous at $\bar{x}$.
\item
\label{c:131029c3}
$f$ is \gateaux\ differentiable at $\bar{x}$ and 
$G$ is strong-to-weak continuous at $\bar{x}$.
\end{enumerate}
\end{theorem}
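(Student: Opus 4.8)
The plan is to prove the cycle of implications $\ref{c:131029c1}\Rightarrow\ref{c:131029c2}\Rightarrow\ref{c:131029c3}\Rightarrow\ref{c:131029c1}$, leaning on the two preceding lemmas and on Smulyan's criterion (Fact~\ref{f:nabla}). Throughout, recall that on $X\smallsetminus C$ we have $f>0$ and $s(x)\neq 0$, and that $Gx=x-\bigl(f(x)/\|s(x)\|^2\bigr)s(x)$, so the continuity behaviour of $G$ is governed by that of $s$, since $f$ is already continuous by standing assumption.

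For $\ref{c:131029c1}\Rightarrow\ref{c:131029c2}$: if $f$ is \frechet\ differentiable at $\bar{x}$, then $s=\nabla f$ near $\bar{x}$ and, by Fact~\ref{f:nabla}\ref{f:nablaF}, $s$ is strong-to-strong continuous at $\bar{x}$. Since $f(\bar x)>0$ forces $\|s(\bar x)\|>0$, the formula for $G$ shows $Gx_n\to G\bar x$ whenever $x_n\to\bar x$; this is the routine calculation I will not grind through. The implication $\ref{c:131029c2}\Rightarrow\ref{c:131029c3}$ is almost immediate: strong-to-strong continuity trivially implies strong-to-weak continuity, so it remains to deduce \gateaux\ differentiability. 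Here I would argue the contrapositive via Lemma~\ref{l:131029b}: were $f$ \emph{not} \gateaux\ differentiable at $\bar x$, then $s$ would fail to be strong-to-weak continuous (Fact~\ref{f:nabla}\ref{f:nablaG}), and one checks that this failure propagates to $G$ through the defining formula, contradicting even strong-to-weak continuity of $G$. Thus $\ref{c:131029c2}$ gives both halves of $\ref{c:131029c3}$.

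The substantive step is $\ref{c:131029c3}\Rightarrow\ref{c:131029c1}$, and this is where Lemma~\ref{l:131029b} does the real work. Assume $f$ is \gateaux\ differentiable at $\bar x$ and $G$ is strong-to-weak continuous at $\bar x$. Suppose, for contradiction, that $f$ is \emph{not} \frechet\ differentiable at $\bar x$; by Smulyan (Fact~\ref{f:nabla}\ref{f:nablaF}) this means $s$ is not strong-to-strong continuous at $\bar x$. I then want to transfer this to the statement that $G$ is not strong-to-strong continuous at $\bar x$, so that the hypotheses of Lemma~\ref{l:131029b} are met, and the lemma's conclusion---that $f$ is \emph{not} \gateaux\ differentiable at $\bar x$---contradicts our standing assumption. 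The technical heart is therefore the equivalence, at a point of $X\smallsetminus C$, between strong-to-strong continuity of $s$ and of $G$: since \gateaux\ differentiability already holds, $s(\bar x)=\nabla f(\bar x)$ is the unique subgradient, and using $s(x_n)=f(x_n)\,(x_n-Gx_n)/\|x_n-Gx_n\|^2$ (Fact~\ref{f:known}\ref{f:known5}) together with $f(\bar x)>0$, one reads off that $s(x_n)\to s(\bar x)$ if and only if $Gx_n\to G\bar x$.

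The main obstacle I anticipate is exactly this last bookkeeping: one must confirm that the failure of strong-to-strong continuity of $s$ really does produce a sequence $x_n\to\bar x$ with $Gx_n\weakly G\bar x$ (supplied by the assumed strong-to-weak continuity of $G$) but $Gx_n\not\to G\bar x$, i.e.\ that the two notions of discontinuity line up on the \emph{same} sequence rather than merely on possibly different ones. This is handled by inverting Fact~\ref{f:known}\ref{f:known5}---writing $x_n-Gx_n$ as a positive multiple of $s(x_n)$ with coefficient $\|x_n-Gx_n\|^2/f(x_n)$ (equivalently $\|x_n-Gx_n\|/\|s(x_n)\|\cdot$ the unit direction, cf.\ Fact~\ref{f:known}\ref{f:known4})---and noting that the scalar factors are bounded away from $0$ and $\infty$ near $\bar x$ because $f(\bar x)>0$ and $\|s\|$ is locally bounded (as in the proof of Lemma~\ref{l:awards}). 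Once that correspondence is pinned down, Lemma~\ref{l:131029b} closes the contradiction and completes the cycle.
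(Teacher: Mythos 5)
Your proposal runs on exactly the same three ingredients as the paper's proof: Smulyan's criterion (Fact~\ref{f:nabla}), the inversion $s(x)=f(x)(x-Gx)/\|x-Gx\|^2$ from Fact~\ref{f:known}\ref{f:known5}, and Lemma~\ref{l:131029b} for the hard direction; you merely arrange them as the cycle \ref{c:131029c1}$\Rightarrow$\ref{c:131029c2}$\Rightarrow$\ref{c:131029c3}$\Rightarrow$\ref{c:131029c1}, whereas the paper proves \ref{c:131029c1}$\Leftrightarrow$\ref{c:131029c2} first and then \ref{c:131029c3}$\Rightarrow$\ref{c:131029c2}. Your steps \ref{c:131029c1}$\Rightarrow$\ref{c:131029c2} and \ref{c:131029c3}$\Rightarrow$\ref{c:131029c1} are sound: since $f(\bar{x})>0$ forces $s(\bar{x})\neq 0$ and $\bar{x}\neq G\bar{x}$, Fact~\ref{f:known}\ref{f:known5} does make strong-to-strong continuity of $s$ and of $G$ at $\bar{x}$ equivalent (sequence by sequence), so failure of \frechet\ differentiability puts you squarely in the hypotheses of Lemma~\ref{l:131029b}, whose conclusion contradicts the assumed \gateaux\ differentiability.

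The gap is in your justification of \ref{c:131029c2}$\Rightarrow$\ref{c:131029c3}. You claim that if $s$ fails to be strong-to-weak continuous at $\bar{x}$, then ``this failure propagates to $G$ through the defining formula,'' contradicting strong-to-weak continuity of $G$. That propagation is not checkable from the formula and in fact goes the wrong way: $Gx=x-\big(f(x)/\|s(x)\|^2\big)s(x)$ involves the norm $\|s(x)\|$, which is not weakly continuous, so one can have $s(x_n)\weakly w\neq s(\bar{x})$ while still $\big(f(x_n)/\|s(x_n)\|^2\big)s(x_n)\weakly\big(f(\bar{x})/\|s(\bar{x})\|^2\big)s(\bar{x})$ --- the scalar factor can exactly compensate a ``shorter'' weak limit $w=t\,s(\bar{x})$ with $t<1$. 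This is precisely the scenario inside Lemma~\ref{l:131029b}: under its hypotheses $G$ \emph{is} strong-to-weak continuous, and its proof shows $s$ is \emph{not}; so weak discontinuity of $s$ cannot in general force weak discontinuity of $G$. (Indeed, the contrapositive of your claim --- $G$ strong-to-weak continuous $\Rightarrow$ $s$ strong-to-weak continuous --- is a strictly stronger statement than the theorem itself, which is why \ref{c:131029c3} carries \gateaux\ differentiability as a separate hypothesis.) The repair is immediate and uses only what you already set up for \ref{c:131029c3}$\Rightarrow$\ref{c:131029c1}: under \ref{c:131029c2}, $x_n\to\bar{x}$ gives $Gx_n\to G\bar{x}$, hence $s(x_n)\to s(\bar{x})$ by Fact~\ref{f:known}\ref{f:known5} (here $\|x_n-Gx_n\|\to\|\bar{x}-G\bar{x}\|>0$), hence $f$ is \frechet\ differentiable at $\bar{x}$ by Fact~\ref{f:nabla}\ref{f:nablaF}, and a fortiori \gateaux\ differentiable; that is, obtain \ref{c:131029c3} by passing through \ref{c:131029c1}, not by the contrapositive propagation.
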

\begin{proof}
``\ref{c:131029c1}$\Rightarrow$\ref{c:131029c2}'': 
By Fact~\ref{f:nabla}\ref{f:nablaF}, 
$s$ is continuous at $\bar{x}$. It follows from the definition of $G$
that $G$ is continuous at $\bar{x}$ as well. 

``\ref{c:131029c1}$\Leftarrow$\ref{c:131029c2}'': 
In view of Fact~\ref{f:known}\ref{f:known5}, 
we have
$s(x) = f(x)(x-Gx)/\|x-Gx\|^2$ for all $x$ sufficiently close to 
$\bar{x}$. 
Hence $s$ is continuous at $\bar{x}$ and therefore $f$ is \frechet\
differentiable at $\bar{x}$ by Fact~\ref{f:nabla}\ref{f:nablaF}. 

``\ref{c:131029c1}$\Rightarrow$\ref{c:131029c3}'' and 
``\ref{c:131029c2}$\Rightarrow$\ref{c:131029c3}'': 
This is clear since 
\ref{c:131029c1}$\Leftrightarrow$\ref{c:131029c2} by the above.

``\ref{c:131029c3}$\Rightarrow$\ref{c:131029c2}'': 
Suppose to the contrary that $G$ is not strong-to-strong continuous. 
Then, by Lemma~\ref{l:131029b}, $f$ is not \gateaux\ differentiable at
$\bar{x}$ which is absurd. 
\end{proof}

\begin{corollary}[continuity]
\label{c:awards}
$G$ is continuous everywhere if and only if
$f$ is \frechet\ differentiable on $X\smallsetminus C$. 
\end{corollary}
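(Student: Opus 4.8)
The plan is to globalize Theorem~\ref{c:131029c} by partitioning $X$ into the two pieces $C$ and $X\smallsetminus C$ and handling them separately. The crucial preliminary observation is that continuity of $G$ on $C$ comes for free: by Proposition~\ref{p:couleen}, $G$ is continuous at every point of $C$ with no differentiability hypothesis on $f$ whatsoever. Consequently, ``$G$ is continuous everywhere'' is equivalent to ``$G$ is continuous at every point of $X\smallsetminus C$'', and the entire biconditional is thereby reduced to the behavior of $G$ on the complement, where Theorem~\ref{c:131029c} applies pointwise.

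For the implication ($\Leftarrow$), I would assume $f$ is \frechet\ differentiable on $X\smallsetminus C$, fix an arbitrary $\bar{x}\in X\smallsetminus C$, and invoke \ref{c:131029c1}$\Rightarrow$\ref{c:131029c2} of Theorem~\ref{c:131029c} to conclude that $G$ is (strong-to-strong) continuous at $\bar{x}$; together with Proposition~\ref{p:couleen} this yields continuity of $G$ at every point of $X$. For ($\Rightarrow$), I would assume $G$ is continuous everywhere, so in particular at each $\bar{x}\in X\smallsetminus C$, and apply \ref{c:131029c2}$\Rightarrow$\ref{c:131029c1} of Theorem~\ref{c:131029c} to obtain \frechet\ differentiability of $f$ at $\bar{x}$; since $\bar{x}$ was arbitrary in $X\smallsetminus C$, this gives \frechet\ differentiability on all of $X\smallsetminus C$.

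There is essentially no obstacle here: the corollary is a pointwise-to-global repackaging of Theorem~\ref{c:131029c} combined with the unconditional boundary behavior supplied by Proposition~\ref{p:couleen}. The only subtlety worth flagging is the asymmetry between the two sets in the statement — one must confirm that \frechet\ differentiability is required only on $X\smallsetminus C$ and never at points of $C$, which is exactly correct precisely because Proposition~\ref{p:couleen} delivers continuity of $G$ on $C$ without appealing to differentiability of $f$ there.
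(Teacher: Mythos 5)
Your proof is correct and is exactly the paper's argument: the paper's proof of Corollary~\ref{c:awards} simply says ``Combine Proposition~\ref{p:couleen} with Theorem~\ref{c:131029c}'', which is precisely your decomposition of $X$ into $C$ (handled unconditionally by Proposition~\ref{p:couleen}) and $X\smallsetminus C$ (handled pointwise by Theorem~\ref{c:131029c}). You have merely written out the details the paper leaves implicit.
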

\begin{proof}
Combine Proposition~\ref{p:couleen} with 
Theorem~\ref{c:131029c}. 
\end{proof}

\begin{example}
Suppose that $X=\RR$ and that 
$(\forall x\in\RR)$
$f(x)=\max\{-x,x,2x-1\}$.
Then $C=\{0\}$ and $f$ is not 
differentiable at $1$;
consequently, by Corollary~\ref{c:awards},
$G$ is not continuous at $1$.
\end{example}

\begin{remark}[weak-to-weak continuity]
It is unrealistic to expect that $G$ is weak-to-weak continuous
even when $f$ is \frechet\ differentiable; 
see \cite[Example~3.2 and Remark~3.3.(ii)]{BCW1}. 
\end{remark}

\section{Continuity of $G$ vs \gateaux\ differentiability of $f$}

\label{s:contG}

In view of Fact~\ref{f:nabla} and Corollary~\ref{c:awards}, 
it is now tempting to conjecture that $G$ is strong-to-weak
continuous if and only if $f$ is \gateaux\ differentiable on
$X\smallsetminus C$. Perhaps somewhat surprisingly, this
turns out to be wrong. The counterexample is based on an
ingenious construction by Borwein and Fabian \cite{BF}.

\begin{example}[Borwein--Fabian]
\label{ex:BF}
{\rm (See \cite[Proof of Theorem~4]{BF}.)}
Suppose that $X$ is infinite-dimensional.
Then there exists a function $b\colon X\to\RR$ such that
the following hold:
\begin{enumerate}
\item 
\label{ex:BF1}
$b$ is continuous, convex and $\min b(X) = b(0) = 0$.
\item 
\label{ex:BF2}
$b$ is \frechet\ differentiable on $X\smallsetminus\{0\}$.
\item 
\label{ex:BF3}
$b$ is \gateaux\ differentiable at $0$, and $\nabla b(0)=0$.
\item 
\label{ex:BF4}
$b$ is not \frechet\ differentiable at $0$.
\end{enumerate}
\end{example}

\begin{example}[lack of strong-to-weak continuity]
\label{ex:131029f}
Let $b$ be as in Example~\ref{ex:BF}. 
Then there exists $y\in X$ such that $\nabla b(y)\neq 0$.
Suppose that
\begin{equation}
(\forall x\in X)\quad
f(x) = b(x) - \scal{\nabla b(y)}{x} 
-\tfrac{1}{2}\big(b(y)-\scal{\nabla b(y)}{y}\big).
\end{equation}
Then the following hold:
\begin{enumerate}
\item
\label{ex:131029f5}
$f$ is \gateaux\ differentiable (but not \frechet\
differentiable) at $0$, and
$G$ is not strong-to-weak continuous at $0$. 
\item
\label{ex:131029f6}
$f$ is \frechet\ differentiable on $X\smallsetminus\{0\}$, and 
$G$ is continuous on $X\smallsetminus\{0\}$. 
\end{enumerate}
\end{example}
\begin{proof}
By Example~\ref{ex:BF}\ref{ex:BF3}, $0\in\ran \nabla b$.
If $\{0\} = \ran \nabla b$, then we would deduce 
that $b$ is constant
and therefore \frechet\ differentiable; in turn, this 
would contradict Example~\ref{ex:BF}\ref{ex:BF4}. 
Hence $\{0\}\subsetneqq \ran\nabla b$ and there exists
$y\in X$ such that 
\begin{equation}
\label{e:0308a}
v = \nabla b(y)\neq 0.
\end{equation}
Now set 
\begin{equation}
g\colon X\to\RR\colon x\mapsto b(x)-\scal{v}{x}.
\end{equation}
Then 
\begin{equation}
(\forall x\in X)\quad
f(x) = g(x)-\tfrac{1}{2}g(y), 
\end{equation}
and $g(0)=b(0)-\scal{v}{0}=0$ by
Example~\ref{ex:BF}\ref{ex:BF1}.
Example~\ref{ex:BF}\ref{ex:BF3} and \eqref{e:0308a} yield
$\nabla g(0)=\nabla b(0)-v=-v\neq 0$
while $\nabla g(y)=\nabla b(y)-v=0$.
Hence
$\min g(X) = g(y)<g(0) = 0$ and therefore
\begin{equation}
f(y) = \min f(X) = \min g(X) - \tfrac{1}{2}g(y) = \tfrac{1}{2}g(y)< 0 <
0 - \tfrac{1}{2}g(y) = f(0).
\end{equation}
Thus $y\in C$ while $0\notin C$. 

\ref{ex:131029f5}: 
On the one hand, since $b$ is not \frechet\ differentiable at $0$
(Example~\ref{ex:BF}\ref{ex:BF4}), neither is $f$. 
On the other hand, since $b$ is \gateaux\ differentiable at $0$
(Example~\ref{ex:BF}\ref{ex:BF3}),
so is $f$. 
Altogether, $f$ is \gateaux\ differentiable, but not \frechet\
differentiable, at $0$. 
Therefore, by Theorem~\ref{c:131029c}, $G$ is not strong-to-weak
continuous at $0$. 

\ref{ex:131029f6}: 
Since $b$ is \frechet\ differentiable on $X\smallsetminus\{0\}$
(Example~\ref{ex:BF}\ref{ex:BF2}), so is $f$. 
Now apply Theorem~\ref{c:131029c}. 
\end{proof}

\section{$G$ as an ``accelerated mapping''}

In this section, we consider the case when $f$ is a power of a
quadratic form. 

\label{s:Frank}

\begin{proposition}
\label{p:0308b}
Suppose that 
$f\colon x\mapsto \sqrt{\scal{x}{Mx}^p}$, where
$p\geq 1$ and
$M\colon X\to X$ be continuous, linear, self-adjoint, and
positive. 
Then $G$ is continuous everywhere and 
\begin{equation}
(\forall x\in X)\quad
Gx = 
\begin{cases}
\displaystyle x - \frac{\scal{x}{Mx}}{p\|Mx\|^2}Mx, &\text{if $Mx\neq 0$;}\\
x, &\text{if $Mx=0$.}
\end{cases}
\end{equation}
\end{proposition}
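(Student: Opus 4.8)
The plan is to exhibit $f$ as a power of the seminorm induced by $M$ and then compute its gradient directly. First I would set $q(x)=\scal{x}{Mx}=\|M^{1/2}x\|^2$ and $r=\sqrt{q}$, so that $f=r^p$. Since $r$ is a seminorm (because $M$ is positive and self-adjoint, $M^{1/2}$ exists) and $t\mapsto t^p$ is convex and increasing on $\RP$ for $p\geq 1$, the function $f$ is convex and continuous, and $C=\menge{x\in X}{f(x)=0}=\ker M\ni 0$. This confirms that $f$ fits the standing framework and identifies the fixed-point set.

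For the formula I would work on $X\smallsetminus C=\menge{x}{Mx\neq 0}$, where $q(x)>0$. There $q$ is \frechet\ differentiable with $\nabla q(x)=2Mx$ (by self-adjointness of $M$), and $t\mapsto t^{p/2}$ is smooth on $\RPP$, so the chain rule gives that $f=q^{p/2}$ is \frechet\ differentiable with $\nabla f(x)=p\,q(x)^{(p/2)-1}Mx$. Substituting into $Gx=x-f(x)\|\nabla f(x)\|^{-2}\nabla f(x)$ and collecting powers of $q(x)$---the numerator exponent $p/2+(p/2-1)=p-1$ against the exponent $p-2$ coming from $\|\nabla f(x)\|^2=p^2q(x)^{p-2}\|Mx\|^2$, leaving a single factor $q(x)$---yields $Gx=x-\scal{x}{Mx}(p\|Mx\|^2)^{-1}Mx$. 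On $C$, i.e.\ when $Mx=0$, the definition of $G$ gives $Gx=x$ directly.

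For continuity everywhere, the differentiability just established shows that $f$ is \frechet\ differentiable on all of $X\smallsetminus C$; Corollary~\ref{c:awards} then immediately gives that $G$ is continuous everywhere.

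The only place calling for a little care is the verification of convexity: the naive decomposition $f=(q)^{p/2}$ with $q$ convex does not suffice, since $t\mapsto t^{p/2}$ fails to be convex for $1\leq p<2$; it is the seminorm representation $f=r^p$ that makes convexity transparent. Beyond that point, the argument is a routine chain-rule computation together with a single invocation of Corollary~\ref{c:awards}, so I anticipate no genuine obstacle.
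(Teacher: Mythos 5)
Your proof is correct: the gradient computation, the exponent bookkeeping, and the appeal to Corollary~\ref{c:awards} all go through, and your closing remark about convexity is exactly the right point of care (convexity must come from the seminorm representation $f=r^p$, not from composing $q$ with $t\mapsto t^{p/2}$, which is concave for $1\leq p<2$). Your route differs from the paper's in how $p>1$ is handled. The paper first settles $p=1$: writing $f(x)=\|Bx\|$ with $B$ the positive square root of $M$, it computes $\nabla f(x)=Mx/\|Bx\|$ on $X\smallsetminus\ker M$, obtains the formula, and gets continuity from Corollary~\ref{c:awards}; for $p>1$ it then invokes the power rule of Proposition~\ref{p:calc}\ref{p:calc3}, namely $\GG_{f^\alpha}=(1-\alpha^{-1})\Id+\alpha^{-1}\GG_f$, which converts the $p=1$ formula into the general one in a single line. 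You instead run one chain-rule computation on $q(x)=\scal{x}{Mx}$ that covers all $p\geq 1$ simultaneously. What each buys: the paper's argument reuses the calculus machinery of Section~\ref{s:calc}, so no exponent arithmetic is needed at all; yours is self-contained, avoids the intermediate function $x\mapsto\sqrt{\scal{x}{Mx}}$, and has the minor advantage of verifying convexity explicitly for every $p$ (the paper checks it only for $p=1$; for $p>1$ convexity is a hypothesis of Proposition~\ref{p:calc}\ref{p:calc3} and is left implicit, resting on the same seminorm-power observation you spell out).
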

\begin{proof}
Assume first that $p=1$.
Since $M$ has a unique positive square root, i.e.,
there exists\footnote{See, e.g., \cite[Theorem~9.4-2]{Kreyszig},
where this is stated in a complex Hilbert space; however, the proof
works unchanged in our real setting as well.} 
$B\colon X\to X$ such that $B$ is continuous, linear,
self-adjoint, and positive, and $\ker B = \ker M$.
Hence $(\forall x\in X)$ $f(x)= \sqrt{\scal{x}{Mx}} = \|Bx\|$
so $f$ is indeed convex and continuous.
If $x\in X\smallsetminus\ker M = X\smallsetminus
\ker B$, then $f$ is \frechet\ differentiable at $x$ with 
$\nabla f(x) = B^*Bx/\|Bx\| = Mx/\|Bx\|$; hence,
\begin{equation}
Gx = 
x - \frac{\|Bx\|}{\|Mx\|^2/\|Bx\|^2}\frac{Mx}{\|Bx\|} = 
x - \frac{\|Bx\|^2}{\|Mx\|^2}Mx =
x - \frac{\scal{x}{Mx}}{\|Mx\|^2}Mx
\end{equation}
and $G$ is continuous everywhere by Corollary~\ref{c:awards}. 
If $p>1$, then the result follows from the above and
Proposition~\ref{p:calc}\ref{p:calc3}. 
\end{proof}

\begin{example}
\label{ex:Frank}
Let $A\colon X\to X$ be linear, self-adjoint, and nonexpansive. 
Suppose that $(\forall x\in X)$
$f(x) = \sqrt{\scal{x}{x-Ax}}$.
Then $G$ is continuous everywhere and 
\begin{equation}
(\forall x\in X)\quad 
Gx= 
\begin{cases}
\displaystyle x - \frac{\scal{x}{x-Ax}}{\|x-Ax\|^2}(x-Ax), &\text{if
$Ax\neq x$;}\\
x, &\text{if $Ax=x$.}
\end{cases}
\end{equation}
\end{example}
\begin{proof}
Use Proposition~\ref{p:0308b} with $M=\Id-A$ and $p=1$. 
\end{proof}

\begin{remark}[accelerated mapping]
Let $A\colon X\to X$ be linear, nonexpansive, and self-adjoint. 
In \cite{BDHP}, the authors study the accelerated
mapping\footnote{In fact, the operator $A$ in \cite{BDHP} need 
not necessarily be self-adjoint.} of $A$,
i.e., 
\begin{equation}
x \mapsto t_xAx+ (1-t_x)x,
\quad
\text{where }
t_x = \begin{cases}
\displaystyle \frac{\scal{x}{x-Ax}}{\|x-Ax\|^2}, &\text{if $x\neq
Ax$;}\\
1, &\text{otherwise.}
\end{cases}
\end{equation}
In view of the Example~\ref{ex:Frank}, the accelerated mapping of
$A$ is precisely the subgradient projector $G$ of the function
$x\mapsto \sqrt{\scal{x}{x-Ax}}$.
Now suppose that $X = \ell^2(\NN)$, let $(e_n)_\nnn$ be the
standard orthonormal basis of $X$, and suppose that
\begin{equation}
A\colon X\to X\colon x\mapsto \sum_\nnn
\tfrac{n}{n+1}\scal{e_n}{x}e_n.
\end{equation}
Then $G$ is continuous (Example~\ref{ex:Frank}); however,
$G$ is neither linear nor uniformly continuous 
(see the \cite[Remark following Lemma~3.8]{BDHP}). 
\end{remark}

\section{Nonexpansiveness}

We now discuss when $G$ is (firmly) nonexpansive or monotone.

\label{s:nonexp}

\begin{proposition}
\label{p:nonexp}
Suppose that $f$ is \gateaux\ differentiable on $X\smallsetminus
C$ and that $G_f$ is firmly nonexpansive.
Then $G_g$ is likewise in each of the following situations:
\begin{enumerate}
\item
\label{p:nonexp1}
$\alpha >0$, and $g=f\circ \alpha\Id$ is convex.
\item
\label{p:nonexp2}
$f\geq 0$, $\alpha\geq 1$, and $g=f^\alpha$ is convex.
\item
$A\colon X\to X$ is continuous and linear, $AA^*=A^*A=\Id$,
and $g=f\circ A$.
\item
$z\in X$ and $g\colon x\mapsto f(x-z)$.
\end{enumerate}
The analogous statement holds when $G_f$ is assumed to be
nonexpansive. 
\end{proposition}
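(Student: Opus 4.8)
The plan is to reduce everything to the calculus rules of Proposition~\ref{p:calc}, combined with the standard characterization that an operator $T\colon X\to X$ is firmly nonexpansive if and only if its reflection $2T-\Id$ is nonexpansive. Because $f$ is \gateaux\ differentiable on $X\smallsetminus C$, the operator $\GG_f$ is single-valued there and is identified with $G_f$; the same holds for $g$ in each case, so Proposition~\ref{p:calc} delivers explicit formulas for $G_g$ in terms of $G_f$, and the whole argument becomes an exercise in transferring (firm) nonexpansiveness through these formulas.

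I would first dispatch the three situations that are conjugations by an isometry or by a compensated scaling. In the third situation, Proposition~\ref{p:calc}\ref{p:calc4} gives $G_g = A^*\circ G_f\circ A$; since $A^*A = AA^* = \Id$, both $A$ and $A^*$ preserve norms, and a direct computation yields
\begin{equation}
2G_g - \Id = A^*\circ(2G_f-\Id)\circ A.
\end{equation}
Nonexpansiveness of $2G_f-\Id$ then transfers to $2G_g-\Id$ because the outer $A^*$ and inner $A$ are norm-preserving. In the fourth situation, Proposition~\ref{p:calc}\ref{p:calc5} gives $G_g\colon x\mapsto z+G_f(x-z)$, so $2G_g-\Id\colon x\mapsto z+(2G_f-\Id)(x-z)$ is a conjugation of $2G_f-\Id$ by the isometric translation $x\mapsto x+z$. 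The first situation is analogous: Proposition~\ref{p:calc}\ref{p:calc2} gives $G_g = \alpha^{-1}G_f\circ\alpha\Id$, whence $2G_g-\Id = \alpha^{-1}(2G_f-\Id)\circ\alpha\Id$, and here the inner scaling by $\alpha$ and the outer scaling by $\alpha^{-1}$ compensate exactly, so $2G_f-\Id$ nonexpansive forces $2G_g-\Id$ nonexpansive.

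The second situation is handled by convexity instead. Proposition~\ref{p:calc}\ref{p:calc3} gives $G_g = (1-\alpha^{-1})\Id+\alpha^{-1}G_f$, and since $\alpha\geq 1$ the weight $\alpha^{-1}$ lies in $\left]0,1\right]$. Hence
\begin{equation}
2G_g-\Id = (1-\alpha^{-1})\Id+\alpha^{-1}(2G_f-\Id)
\end{equation}
is a convex combination of $\Id$ and $2G_f-\Id$, both nonexpansive, and is therefore nonexpansive by the triangle inequality; thus $G_g$ is firmly nonexpansive.

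For the final claim about nonexpansiveness, I would rerun the identical computations on $G_g$ itself rather than on the reflection: an isometric conjugation or a compensated scaling preserves nonexpansiveness directly, and in the second situation the estimate $\|G_gx-G_gy\|\leq(1-\alpha^{-1})\|x-y\|+\alpha^{-1}\|G_fx-G_fy\|$ finishes the argument. The only point requiring care is the bookkeeping in the reflection identities above---in particular verifying that $2G_g-\Id$ genuinely equals the conjugated (or convexly combined) reflection in each case. No real obstacle arises, however, since each transformation in the list is an isometric conjugation, a compensated similarity, or a convex combination with the identity, all of which manifestly preserve both nonexpansiveness and firm nonexpansiveness.
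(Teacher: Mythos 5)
Your proposal is correct and follows essentially the same route as the paper, whose proof is a one-line appeal to the corresponding items of Proposition~\ref{p:calc} with the remark that those transformations preserve (firm) nonexpansiveness. You simply make explicit what the paper leaves implicit---the reflection identities $2G_g-\Id = A^*\circ(2G_f-\Id)\circ A$, $\alpha^{-1}(2G_f-\Id)\circ\alpha\Id$, the translation conjugation, and the convex combination $(1-\alpha^{-1})\Id+\alpha^{-1}(2G_f-\Id)$---and these verifications are accurate.
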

\begin{proof}
This follows from the corresponding items in
Proposition~\ref{p:calc}, which do preserve (firm)
nonexpansiveness.
\end{proof}

On the real line, we obtain a simpler test.

\begin{proposition}
\label{p:changeclock}
Suppose that $X=\RR$ and that
$f$ is twice differentiable on $X\smallsetminus C$.
Then $G$ is monotone.
Moreover, $G$ is (firmly) nonexpansive if and only if
\begin{equation}
(\forall x\in\RR)\quad
f(x)f''(x) \leq \big(f'(x)\big)^2.
\end{equation}
\end{proposition}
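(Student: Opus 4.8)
The plan is to work on the real line, where the subgradient projector takes the explicit form $Gx = x - f^+(x)/f'(x)$ (with $Gx=x$ on $C$), and to reduce everything to a derivative computation on $X\smallsetminus C$.

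First I would establish monotonicity. Since $C=\menge{x}{f(x)\le 0}$ is a closed interval (by convexity of $f$) and $f\ge 0$ off $C$, it suffices to analyze the behavior on each connected component of $X\smallsetminus C$. On such an interval $f>0$, so $Gx = x - f(x)/f'(x)$. Differentiating gives
\begin{equation}
G'(x) = 1 - \frac{(f'(x))^2 - f(x)f''(x)}{(f'(x))^2}
= \frac{f(x)f''(x)}{(f'(x))^2}.
\end{equation}
Because $f$ is convex we have $f''\ge 0$, and since $f>0$ off $C$ this yields $G'(x)\ge 0$ on $X\smallsetminus C$; on $\inte C$ we have $G=\Id$ so $G'=1\ge 0$. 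Hence $G'\ge 0$ throughout, which (on $\RR$) is exactly monotonicity of $G$.

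Next I would prove the nonexpansiveness characterization. On $\RR$, a differentiable map $G$ is nonexpansive if and only if $|G'(x)|\le 1$ everywhere, and the monotonicity already established means $G'\ge 0$, so nonexpansiveness is equivalent to $0\le G'(x)\le 1$, i.e.\ $G'(x)\le 1$. Using the formula above, $G'(x)\le 1$ reads $f(x)f''(x)/(f'(x))^2 \le 1$, that is $f(x)f''(x)\le (f'(x))^2$, on $X\smallsetminus C$; and on $\inte C$ the stated inequality holds trivially since $f\le 0\le (f')^2$ there (with $f''\ge 0$ forcing $f=0$ on any nondegenerate subinterval). For the firm-nonexpansiveness equivalence I would recall that on $\RR$ firm nonexpansiveness of $G$ is the same as $2G-\Id$ being nonexpansive, i.e.\ $|2G'-1|\le 1$, i.e.\ $0\le G'\le 1$ --- which is the identical condition, so ``nonexpansive'' and ``firmly nonexpansive'' coincide here and both reduce to the displayed inequality.

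The main obstacle will be the matching at the boundary points of $C$, where $Gx=x$ meets the formula $x-f/f'$ from outside: one must check continuity and the one-sided derivative behavior of $G$ at these transition points to justify that the pointwise bound $|G'|\le 1$ on each open piece really controls the global Lipschitz constant of $G$. The key point is that $G$ is continuous everywhere (by Corollary~\ref{c:awards}, since twice differentiability off $C$ gives \frechet\ differentiability there, and Proposition~\ref{p:couleen} handles $C$), so the derivative bound on the dense open set $X\smallsetminus\bd C$ propagates, via the mean value theorem applied on each closed subinterval, to the global nonexpansiveness estimate; this lets me conclude the equivalence without separately differentiating at the finitely many boundary points.
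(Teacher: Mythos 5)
Your proposal is correct and follows essentially the same route as the paper's proof: compute $G'(x)=f(x)f''(x)/(f'(x))^2\geq 0$ off $C$, invoke continuity of $G$ (Corollary~\ref{c:awards}) to glue across $\bd C$, and observe that nonexpansiveness and firm nonexpansiveness both reduce to $0\leq G'\leq 1$. You merely spell out details the paper leaves implicit, such as the equivalence $|2G'-1|\leq 1 \Leftrightarrow 0\leq G'\leq 1$ and the mean value theorem argument across the finitely many boundary points.
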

\begin{proof}
By Corollary~\ref{c:awards}, $G$ is continuous. 
Let $x\in\RR\smallsetminus C$.
Then $G(x) = x-f(x)/f'(x)$ and hence
$G'(x) = f(x)f''(x)/(f'(x))^2\geq 0$.
It follows that $G$ is increasing on $X\smallsetminus C$ and
hence on $\RR$. 
Furthermore, $G$ is (firmly) nonexpansive if and only if
$G'(x)\leq 1$, which gives the remaining characterization.
\end{proof}

\begin{example}
Suppose that $X=\RR$,
let $\alpha>0$, and suppose that $(\forall x\in\RR)$
$f(x)=x^n-\alpha$, where $n\in\{2,4,6,8,\ldots\}$. 
Then $G$ is firmly nonexpansive.
\end{example}
\begin{proof}
If $x\in\RR\smallsetminus C$, then
$(f'(x))^2 -f(x)f''(x)= nx^{n-2}(\alpha n + x^n-\alpha)>0$ and 
we are done by 
Proposition~\ref{p:changeclock}.
\end{proof}


\begin{example}
Suppose that $X=\RR$ and that $f\colon x\mapsto\exp(|x|)-1$.
Then $(\forall x\in X)$ $G(x) = x-\sgn(x)(1-\exp(-|x|))$ and 
$G'(x)=1-\exp(-|x|)\in \left[0,1\right[$.
It follows that $G$ is firmly nonexpansive\footnote{Since $G$ is monotone
by Proposition~\ref{p:changeclock}, its antiderivative
$x\mapsto \tfrac{1}{2}x^2 - |x|-\exp(-|x|)$ is convex --- although this does not
look like convex function on first glance! It is interesting to do this
also for other instances of $f$.}.
\end{example}

\begin{example}
Suppose that $X=\RR$ and that
$f \colon x\mapsto \exp(x^2)-1$.
Then $G$ is not (firmly) nonexpansive.
Indeed, we compute 
$(f'(x))^2-f(x)f''(x)= 4x^2\exp(x^2)+2\exp(x^2)-2\exp(2x^2)$,
which strictly negative when $|x|>1.2$. 
Now apply Proposition~\ref{p:changeclock}. 
\end{example}

\begin{proposition}
Suppose that $X=\RR$ and that 
$f$ is twice differentiable, that $\min f(X)=0$, that
$g = f\Box (1/2)|\cdot|^2$, 
and that $2ff'' \leq (2+f'')(f')^2$. 
Then $G_g$ is firmly nonexpansive.
\end{proposition}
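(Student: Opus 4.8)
The plan is to reduce everything to Proposition~\ref{p:changeclock}, but applied to the Moreau envelope $g$ in place of $f$. Since $\min f(X)=0$ we have $f\geq 0$, so $C_f=\argmin f$, and Proposition~\ref{p:calc}\ref{p:calc8} gives $\min g(X)=0$, $g\geq 0$, and $C_g=C_f$. By Proposition~\ref{p:changeclock} (now with $g$ as the underlying function) it then suffices to verify two things: that $g$ is twice differentiable on $\RR\smallsetminus C_g$, and that $g(x)g''(x)\leq (g'(x))^2$ for every $x\in\RR$. Thus the whole argument hinges on computing $g'$ and $g''$ explicitly in terms of $p=\prox_f x$ and inserting them into that last inequality.

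First I would record the first-order data. Writing $p=\prox_f x$, the optimality condition for the Moreau envelope gives $x-p=f'(p)$, and since $\nabla g=\Id-\prox_f$ (Proposition~\ref{p:calc}\ref{p:calc8}) we obtain
\begin{equation}
g'(x)=x-p=f'(p)\qquad\text{and}\qquad g(x)=f(p)+\thalb(x-p)^2=f(p)+\thalb\big(f'(p)\big)^2.
\end{equation}
For the second-order term I would differentiate the relation $x=p+f'(p)$. Because $f$ is convex we have $1+f''(p)\geq 1>0$, so the one-dimensional inverse function theorem shows that $x\mapsto p(x)$ is differentiable with $p'(x)=1/(1+f''(p))$; hence $g$ is twice differentiable (in particular off $C_g$) and
\begin{equation}
g''(x)=1-p'(x)=\frac{f''(p)}{1+f''(p)}\geq 0.
\end{equation}
This differentiability step is the only genuinely technical point, and it is mild: the map $p\mapsto p+f'(p)$ has derivative $1+f''(p)$ that never vanishes.

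It remains to substitute. The target inequality $g(x)g''(x)\leq(g'(x))^2$ reads
\begin{equation}
\Big(f(p)+\thalb\big(f'(p)\big)^2\Big)\frac{f''(p)}{1+f''(p)}\leq \big(f'(p)\big)^2.
\end{equation}
I would clear the strictly positive denominator $1+f''(p)$ and expand; the two copies of $\big(f'(p)\big)^2f''(p)$ partially cancel, and after rearranging the inequality becomes exactly
\begin{equation}
2f(p)f''(p)\leq\big(2+f''(p)\big)\big(f'(p)\big)^2,
\end{equation}
which is precisely the hypothesis $2ff''\leq(2+f'')(f')^2$ evaluated at $p$. Since every manipulation is an equivalence (the denominator being positive), the desired inequality holds at $x$, and as $x$ was arbitrary the criterion of Proposition~\ref{p:changeclock} is met, so $G_g$ is firmly nonexpansive. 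I expect no real obstacle beyond the bookkeeping above: for $x\in C_g$ one has $g(x)=0$, so the inequality is automatic, while for the remaining $x$ the substitution handles everything uniformly and matches the hypothesis on the nose.
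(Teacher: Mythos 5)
Your proof is correct and takes essentially the same route as the paper: both arguments compute $g'=\Id-\prox_f$ and $g''(x)=f''(\prox_f x)/\bigl(1+f''(\prox_f x)\bigr)$ by differentiating the relation $x=\prox_f x+f'(\prox_f x)$, then invoke Proposition~\ref{p:changeclock} for $g$ and reduce the criterion $gg''\leq (g')^2$ to the hypothesis $2ff''\leq(2+f'')(f')^2$ evaluated at $\prox_f x$. The only (harmless) differences are that you justify the differentiability of $\prox_f$ via the inverse function theorem and write out the denominator-clearing algebra, both of which the paper leaves implicit.
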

\begin{proof}
We start by observing a couple of facts.
First, 
\begin{equation}
g' = \Id-\prox_f.
\end{equation}
Write $y=\prox_f(x)$. Then
$x=y+f'(y)$ and hence implicit differentiation gives
$1= y'(x)+f''(y)y'(x)=y'(x)(1+f''(y(x)))$.
Hence $y'=1/(1+f''(y(x)))$ and thus
\begin{equation}
g''(x)=\big(\Id-\prox_f\big)'(x) = 1- \frac{1}{1+f''(\prox_f(x))}
= \frac{f''\big(\prox_f(x)\big)}{1+f''\big(\prox_f(x)\big)}.
\end{equation}
In view of Proposition~\ref{p:changeclock} 
and because $g(x)=f(\prox_f(x)) + (1/2)(x-\prox_f(x))^2$
we must verify that 
$gg''\leq (g')^2$, i.e.,
\begin{equation}
\label{e:0317a}
\frac{\big(f(\prox_f(x)) + \thalb(x-\prox_f(x))^2   \big)f''\big(\prox_f(x)\big)}{1+f''\big(\prox_f(x)\big)}
\leq \big(x-\prox_f(x)\big)^2.
\end{equation}
Again writing $y=\prox_f(x)$ gives $x-\prox_f(x)=f'(y)$ and so 
see that \eqref{e:0317a} is equivalent to 
\begin{equation}
\label{e:0317b}
\frac{\big(f(y) + \thalb(f'(y))^2 \big)f''(y)}{1+f''(y)}
\leq \big(f'(y)\big)^2.
\end{equation}
However, \eqref{e:0317b} holds by our assumption on $f$. 
\end{proof}

We conclude this section with a result on the range of $\Id-G$.

\begin{proposition}
We have $\ran(\Id-G)\subseteq\cone\ran\partial f \subseteq(\reck
C)^\ominus$. 
\end{proposition}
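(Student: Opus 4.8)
The plan is to prove the two inclusions separately, the first being essentially a matter of reading off the definition of $G$ and the second carrying the actual content. For $\ran(\Id-G)\subseteq\cone\ran\partial f$, fix $x\in X$. If $f(x)\le 0$ then $Gx=x$, so $x-Gx=0$; if $f(x)>0$ then directly from the definition of $G$ (equivalently, from Fact~\ref{f:known}\ref{f:known5}) we have $x-Gx=\big(f(x)/\|s(x)\|^2\big)s(x)$, a strictly positive multiple of the subgradient $s(x)\in\partial f(x)\subseteq\ran\partial f$. Since $f$ is convex and continuous on all of $X$ it is subdifferentiable everywhere, so $\ran\partial f\neq\varnothing$ and $0\in\cone\ran\partial f$; hence in both cases $x-Gx\in\cone\ran\partial f$, which settles the first inclusion with no real work.

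For $\cone\ran\partial f\subseteq(\reck C)^\ominus$, I would first observe that $(\reck C)^\ominus$ is a convex cone containing $0$, so it suffices to prove the pointwise statement $\ran\partial f\subseteq(\reck C)^\ominus$; the passage to $\cone\ran\partial f$ is then automatic. Thus fix $x^*\in\partial f(x)$ and a recession direction $d\in\reck C$, pick any $c\in C$, and recall that $c+\lambda d\in C$, hence $f(c+\lambda d)\le 0$, for every $\lambda\ge 0$. Substituting $y=c+\lambda d$ into the subgradient inequality $f(y)\ge f(x)+\scal{x^*}{y-x}$ yields $0\ge f(x)+\scal{x^*}{c-x}+\lambda\scal{x^*}{d}$ for all $\lambda\ge 0$, with the first two summands independent of $\lambda$.

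The decisive step is the last one, and it is where I expect the only genuine care to be needed: the bound is affine in $\lambda$, so if $\scal{x^*}{d}>0$ then letting $\lambda\to+\infty$ sends $f(x)+\scal{x^*}{c-x}+\lambda\scal{x^*}{d}$ to $+\infty$, contradicting the standing inequality $0\ge f(x)+\scal{x^*}{c-x}+\lambda\scal{x^*}{d}$. Hence $\scal{x^*}{d}\le 0$ for every $d\in\reck C$, i.e.\ $x^*\in(\reck C)^\ominus$, which completes the second inclusion. The subtlety worth flagging is purely conventional, namely whether $\cone$ is taken to contain $0$; this does not affect the final result, since the only vectors $x-Gx$ that can equal $0$ arise from $x\in C$, and $0$ lies in $(\reck C)^\ominus$ regardless.
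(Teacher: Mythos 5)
Your proof is correct and takes essentially the same route as the paper: for the inclusion $\cone\ran\partial f\subseteq(\reck C)^\ominus$, the paper runs the identical argument, evaluating the subgradient inequality at $c+nx$ with $c\in C$ and $x\in\reck C$ and letting $n\to\pinf$ (dividing by $n$ and passing to the limit rather than arguing by contradiction). The first inclusion $\ran(\Id-G)\subseteq\cone\ran\partial f$, which you verify explicitly together with the remark about whether $\cone$ is taken to contain $0$, is left implicit in the paper.
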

\begin{proof}
Let $y^*\in\partial f(y)$,
let $c\in C$, and let $x\in\reck C$.
Then $(c+nx)_\nnn$ lies in $C$.
Hence $(\forall n\geq 1)$ 
$0\geq f(c+nx)\geq f(y)+\scal{y^*}{c+nx-y}$ and thus
\begin{equation}
\scal{y^*}{x} \leq \frac{\scal{y^*}{y-c}-f(y)}{n}\to 0
\quad\text{as $n\to\pinf$.}
\end{equation}
It follows that $y^*\in(\reck C)^\ominus$.
Therefore, $\ran(\Id-G)\subseteq \cone\ran\partial f\subseteq
(\reck C)^\ominus$.
\end{proof}

\section{The decreasing property}

\label{s:decrease}

We say that $f$ has the \emph{decreasing property}
if 
\begin{equation}
(\forall x\in X) \quad \sup f(\GG x)\leq f(x).
\end{equation}
To verify this, it suffices to consider points outside $C$.

\begin{proposition}
\label{p:plumber}
If $(\forall x\in X)$ $Gx \in \conv(\{x\}\cup C)$,
then $f$ has the decreasing property.
\end{proposition}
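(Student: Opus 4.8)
The plan is to verify the decreasing property directly from the defining inequality $\sup f(\GG x)\leq f(x)$, reducing first to the case $x\notin C$ as the preceding remark permits. Indeed, if $x\in C$, then every selection $s$ satisfies $G_sx=x$ by Fact~\ref{f:known}\ref{f:known1}, so $\GG x=\{x\}$ and $\sup f(\GG x)=f(x)$ trivially. Thus I would fix $x\in X\smallsetminus C$ and an arbitrary point $z\in\GG x$, and aim to show $f(z)\leq f(x)$; taking the supremum over $z$ then finishes the argument.

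The core of the proof is a one-line convexity estimate, made available by the hypothesis $z\in\conv(\{x\}\cup C)$. First I would record that, since $C$ is convex, any element of $\conv(\{x\}\cup C)$ can be written as $z=\lambda x+(1-\lambda)c$ with $\lambda\in[0,1]$ and a single point $c\in C$ (a convex combination of several points of $C$ collapses to one point of $C$). Convexity of $f$ then gives
\begin{equation}
f(z)=f\big(\lambda x+(1-\lambda)c\big)\leq \lambda f(x)+(1-\lambda)f(c).
\end{equation}

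To close, I would exploit the two sign conditions available here: $c\in C$ forces $f(c)\leq 0$, while $x\notin C$ forces $f(x)>0$. Combining these with $\lambda\leq 1$ yields
\begin{equation}
f(z)\leq \lambda f(x)+(1-\lambda)f(c)\leq \lambda f(x)\leq f(x),
\end{equation}
as desired. Since $z\in\GG x$ was arbitrary, $\sup f(\GG x)\leq f(x)$, and the decreasing property holds.

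There is no serious obstacle in this argument; it is a direct consequence of convexity together with the placement of $x$ and $c$ on opposite sides of the sublevel set $C$. The only point requiring a line of care is the reduction of a general convex combination over $\{x\}\cup C$ to the two-point form $\lambda x+(1-\lambda)c$, which relies on the convexity of $C$; and the recognition that it is precisely the inequality $f(x)>0$ on $X\smallsetminus C$ that lets the factor $\lambda\leq 1$ do its work in the final step.
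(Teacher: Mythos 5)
Your proof is correct and follows essentially the same route as the paper's: decompose $Gx$ (resp.\ $z\in\GG x$) as a convex combination $\lambda x+(1-\lambda)c$ with $c\in C$, apply convexity of $f$, and use $f(c)\leq 0$ together with $f(x)>0$ outside $C$. The only differences are cosmetic — you spell out the reduction of a general convex combination over $\{x\}\cup C$ to the two-point form and the trivial case $x\in C$, both of which the paper leaves implicit.
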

\begin{proof}
Let $x\in X\smallsetminus C$. 
Then there exists $c\in C$ and $\lambda\in[0,1]$ such that
$Gx = (1-\lambda)x+\lambda c$.
It follows that
$f(Gx) \leq (1-\lambda)f(x)+\lambda f(c) 
\leq (1-\lambda) f(x)\leq f(x)$.
\end{proof}

\begin{lemma}
\label{l:plumber}
Let $(x,y,z)\in\RR^3$ be such that $x\neq z$
and $(z-y)(x-y)\leq 0$.
Then $y\in\conv\{x,z\}$.
\end{lemma}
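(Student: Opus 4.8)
The statement asserts that the sign condition $(z-y)(x-y)\le 0$ forces $y$ to lie between $x$ and $z$, and the plan is to produce the convex-combination coefficient explicitly. Since $x\neq z$, I would set $\lambda := (y-x)/(z-x)$, which is well defined, and record the companion identity $1-\lambda = (z-y)/(z-x)$. A direct check then gives $(1-\lambda)x+\lambda z = y$, so the entire content of the lemma reduces to verifying the membership $\lambda\in[0,1]$, which is exactly the condition for $y=(1-\lambda)x+\lambda z$ to belong to $\conv\{x,z\}$.

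To establish $\lambda\in[0,1]$ I would form the product $\lambda(1-\lambda) = (y-x)(z-y)/(z-x)^2$. The denominator is strictly positive because $x\neq z$, so the sign of $\lambda(1-\lambda)$ is the sign of the numerator $(y-x)(z-y)$. The hypothesis reads $(z-y)(x-y)\le 0$; rewriting $x-y=-(y-x)$ turns this into $(y-x)(z-y)\ge 0$. Hence $\lambda(1-\lambda)\ge 0$, and for a real number this is equivalent to $0\le\lambda\le 1$. This completes the argument.

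The only place requiring care is the sign bookkeeping in passing from the hypothesis $(z-y)(x-y)\le 0$ to the inequality $(y-x)(z-y)\ge 0$ used for the numerator; a single sign slip there would reverse the conclusion, so I would state the substitution $x-y=-(y-x)$ explicitly rather than leave it implicit. Everything else is a routine verification, and no earlier result from the paper is needed.

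As an alternative, entirely equivalent route, one may argue by cases on the signs of the two factors: $(z-y)(x-y)\le 0$ forces either $z\le y\le x$ or $x\le y\le z$, and in both cases $y$ lies in the closed interval $[\min\{x,z\},\max\{x,z\}]=\conv\{x,z\}$. I prefer the explicit-$\lambda$ version because it simultaneously exhibits the convex coefficient, which is precisely the form in which this lemma is presumably consumed in the surrounding development.
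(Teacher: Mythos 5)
Your proof is correct, but it takes a genuinely different route from the paper. The paper argues by cases on the ordering: assuming $z<x$, it notes that $y>x$ or $y<z$ would each make $(z-y)(x-y)$ a product of two factors of the same strict sign, hence strictly positive, contradicting the hypothesis; so $y\in[z,x]$, and symmetrically for $x<z$. You instead construct the convex coefficient explicitly, setting $\lambda=(y-x)/(z-x)$, verifying $y=(1-\lambda)x+\lambda z$, and reducing the membership $\lambda\in[0,1]$ to the single sign check $\lambda(1-\lambda)=(y-x)(z-y)/(z-x)^2\geq 0$, which is exactly the hypothesis after the substitution $x-y=-(y-x)$. Your sign bookkeeping is right, and the equivalence $\lambda(1-\lambda)\geq 0 \Leftrightarrow \lambda\in[0,1]$ is valid. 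What your version buys: it is case-free and exhibits the coefficient $\lambda$ explicitly, which matches the form in which the lemma is consumed in Proposition~\ref{p:plumber} (where $Gx=(1-\lambda)x+\lambda c$ with $c=P_Cx$); what the paper's version buys is a shorter, purely order-theoretic argument with no algebra at all. Your closing ``alternative route'' is essentially the paper's proof, so you have in effect supplied both.
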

\begin{proof}
Suppose first that $z<x$.
If $y>x$, then $(z-y)(x-y)>0$ because it is the product of two
strictly negative numbers.
Similarly, if $y<z$, then $(z-y)(x-y)>0$.
We deduce that $y\in[z,x]$.
Analogously, when $x<z$, we obtain that $y\in[x,z]$.
In either case, $y\in\conv\{x,z\}$.
\end{proof}

\begin{corollary}
\label{c:plumber}
Suppose that $X=\RR$.
Then $f$ has the decreasing property. 
\end{corollary}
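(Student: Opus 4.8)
The goal is to prove Corollary~\ref{c:plumber}: when $X=\RR$, the function $f$ has the decreasing property. The plan is to reduce this to Proposition~\ref{p:plumber}, which says that if $Gx\in\conv(\{x\}\cup C)$ for every $x$, then $f$ has the decreasing property. So the real work is to verify, in the one-dimensional setting, that $Gx$ always lies in the convex hull of $x$ and the target set $C$. Since $Gx=x$ whenever $x\in C$, I only need to treat $x\in\RR\smallsetminus C$.

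First I would fix $x\in\RR\smallsetminus C$, so that $f(x)>0$ and $s(x)\neq 0$. Because $C\neq\varnothing$ and $f$ is continuous with $f(x)>0$, I can pick a point $c\in C$ (so $f(c)\leq 0$). The aim is to produce a triple $(x,Gx,c)$ to which Lemma~\ref{l:plumber} applies, which would immediately give $Gx\in\conv\{x,c\}\subseteq\conv(\{x\}\cup C)$. The natural pairing is $y=Gx$ in the lemma's notation, so I must check the hypotheses $x\neq c$ and $(c-Gx)(x-Gx)\leq 0$. The first holds since $f(x)>0\geq f(c)$ forces $x\neq c$. The second inequality is exactly Fact~\ref{f:known}\ref{f:known3} specialized to $\RR$: that fact states $\scal{c-Gx}{x-Gx}\leq 0$ for every $c\in C$, and in one dimension the inner product is just the product $(c-Gx)(x-Gx)$.

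Thus Lemma~\ref{l:plumber}, applied with the triple $(x,Gx,c)\in\RR^3$, yields $Gx\in\conv\{x,c\}$. Since $c\in C$, this gives $Gx\in\conv(\{x\}\cup C)$. As $x\in\RR\smallsetminus C$ was arbitrary, and the inclusion is trivial on $C$, the hypothesis of Proposition~\ref{p:plumber} is satisfied for all $x\in\RR$. Note also that in $\RR$ the selection $s$ is essentially unique up to the subdifferential's endpoints, so the set-valued $\GG$ reduces to the single point $Gx$ and the decreasing property $\sup f(\GG x)\leq f(x)$ is just $f(Gx)\leq f(x)$; but this subtlety is already absorbed into Proposition~\ref{p:plumber}, whose conclusion is stated in terms of $f$ having the decreasing property. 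Invoking that proposition completes the proof.

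I do not anticipate a genuine obstacle here: the corollary is a clean specialization, and every ingredient is already in place. The only point requiring a moment's care is recognizing that Fact~\ref{f:known}\ref{f:known3} furnishes precisely the sign condition $(c-Gx)(x-Gx)\leq 0$ demanded by Lemma~\ref{l:plumber}; once that identification is made, the two lemmas chain together mechanically. In short, the structure is: Fact~\ref{f:known}\ref{f:known3} $+$ Lemma~\ref{l:plumber} give $Gx\in\conv(\{x\}\cup C)$, and then Proposition~\ref{p:plumber} delivers the decreasing property.
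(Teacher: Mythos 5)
Your proposal is correct and takes essentially the same route as the paper: Fact~\ref{f:known}\ref{f:known3} plus Lemma~\ref{l:plumber} give $Gx\in\conv(\{x\}\cup C)$, and Proposition~\ref{p:plumber} then yields the decreasing property. The only cosmetic difference is that the paper instantiates the lemma with $z=P_Cx$ while you use an arbitrary $c\in C$; both choices satisfy the lemma's hypotheses, so the arguments are interchangeable.
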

\begin{proof}
Let $x\in\RR\smallsetminus C$.
Then $x\neq P_Cx$ and, 
by Fact~\ref{f:known}\ref{f:known3},
$(P_Cx-Gx)(x-Gx)\leq 0$. 
Lemma~\ref{l:plumber} thus yields
$Gx\in\conv\{x,P_Cx\}$.
Hence $Gx\in \conv(\{x\}\cup C)$, and
we are done by Proposition~\ref{p:plumber}. 
\end{proof}

The next example shows that the decreasing property is not automatic.

\begin{example}
Suppose that $X=\RR^2$,
that $C_1 =\RR\times\{0\}$, that 
$C_2 = \menge{(\xi,\xi)\in X}{\xi\in\RR}$,
and that $f=\max\{d_{C_1},d_{C_2}\}$.
Then $f$ does not have the decreasing property.
\end{example}
\begin{proof}
Set $x=(2,1)$. Then, using Proposition~\ref{p:maxdist}, we obtain
that $Gx=(2,0)$ and $f(x)=1<\sqrt{2}=f(Gx)$.
\end{proof}

We now illustrate that the sufficient condition of
Proposition~\ref{p:plumber} is not necessary:

\begin{example}
\label{ex:ell1}
Suppose that $X=\RR^2$ and that
$(\forall x=(x_1,x_2)\in\RR^2)$ 
$f(x)=|x_1|+|x_2|$.
Then $f$ has the decreasing property, $G^2x=(0,0)$ 
yet $Gx\notin \conv\{(0,0),x\}$ for almost every $x\in\RR^2$.
Furthermore, $G$ is not monotone.
\end{example}
\begin{proof}
Observe that $C=\{(0,0)\}$. 
Let $I=\{1,2,3,4\}$ and 
consider the four halfspaces $(C_i)_{i\in I}$
with normal vectors
$(1,1)$ and $(1,-1)$ with $(0,0)$ in their boundaries,
and with the two boundary hyperplanes $H_1$ and $H_2$. 
Then $f= \sqrt{2}\max_{i\in I} d_{C_i} = \sqrt{2}\max\{d_{H_1},d_{H_2}\}$ 
by Example~\ref{ex:linear}\ref{ex:linear1}.
Proposition~\ref{p:maxdist} implies that 
$G$ is the projector onto the farther hyperplane on 
$\RR^2\smallsetminus S$,
where $S = (\RR\times\{0\})\cup(\{0\}\times\RR)$. 
It is thus clear that
$Gx\notin \conv\{(0,0),x\}$ and that
$f(Gx)\leq f(x)$ for every $x\in\RR^2\smallsetminus S$.
When $x\in S$, one checks directly that $f(Gx)\leq f(x)$. 
Hence $f$ has the decreasing property. 
Finally, let $x=(-1,3)$ and $y=(1,3)$.
Then $Gx=(1,1)$ and $Gy=(-1,1)$ and hence
$\scal{x-y}{Gx-Gy}=-4<0$ so $G$ is not monotone.
\end{proof}

\begin{remark}[infeasibility detection]
Using the decreasing property, one obtains a sufficient condition
for \emph{infeasibility}:
Suppose that $X=\RR$ and we find a point $x$ such
that $f(Gx)>f(x)$.
Then $C$ must be empty because of Corollary~\ref{c:plumber}.
For instance, suppose that $f\colon x\mapsto x^2+1$.
Then 
\begin{equation}
\label{e:chaotic}
(\forall x\in\RR\smallsetminus\{0\})\quad
Gx = (x^2-1)/(2x).
\end{equation}
Now set $x = 1/2$.
Then $Gx=-3/4$ and
$f(Gx)=25/16>5/4=f(x)$. 
\end{remark}

\begin{remark}[Newton iteration]
Suppose that $X=\RR$ and that $f$ is differentiable on
$X\smallsetminus C$. Then 
\begin{equation}
(\forall x\in\RR\smallsetminus C)\quad
Gx = x - \frac{f(x)}{\big(f'(x)\big)^2}f'(x) = x - \frac{f(x)}{f'(x)}
\end{equation}
is the same as the Newton operator for  finding a zero of $f$!
It is known since the 19th century
that the concrete instance \eqref{e:chaotic}
exhibits chaotic behaviour; see, e.g., \cite[Problem~7-a on
page~72]{Milnor}. 
\end{remark}

The decreasing property is preserved in certain cases:

\begin{proposition}
\label{p:decalc}
Suppose that $f$ has the decreasing property.
Then the following hold:
\begin{enumerate}
\item
\label{p:decalc1}
If $\alpha>0$, then $\alpha f$ has the decreasing property.
\item
\label{p:decalc2}
If $\alpha\geq 1$, then $(f^+)^\alpha$ has the decreasing property.
\end{enumerate}
\end{proposition}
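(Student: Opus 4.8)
The plan is to reduce both claims to the calculus rules of Proposition~\ref{p:calc} combined with the defining inequality $\sup f(\GG x)\le f(x)$ of the decreasing property. Part~\ref{p:decalc1} is immediate: Proposition~\ref{p:calc}\ref{p:calc1} gives $C_{\alpha f}=C_f$ and $\GG_{\alpha f}=\GG_f$, so that $\alpha f$ sits in our standing framework, and since $\alpha>0$ pulls out of a supremum I would simply write, for every $x\in X$,
\begin{equation*}
\sup(\alpha f)(\GG_{\alpha f}x)=\alpha\sup f(\GG_f x)\le\alpha f(x)=(\alpha f)(x).
\end{equation*}

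Part~\ref{p:decalc2} is the substantive one. First I would record the structural facts: $h:=f^+=\max\{0,f\}$ is convex, continuous and nonnegative, and for $\alpha\ge1$ the map $t\mapsto t^\alpha$ is convex and increasing on $[0,+\infty[$; hence $g:=(f^+)^\alpha=h^\alpha$ is convex and continuous with $C_g=C_f\ne\varnothing$, so the decreasing property is meaningful for $g$ and Proposition~\ref{p:calc}\ref{p:calc3} is applicable to $h$. Combining that rule (which yields $\GG_g=(1-\alpha^{-1})\Id+\alpha^{-1}\GG_h$) with Proposition~\ref{p:calc}\ref{p:calc7} (which gives $\GG_h=\GG_f$), and setting $\lambda:=\alpha^{-1}\in\,]0,1]$, I would read off the explicit description
\begin{equation*}
\GG_g(x)=\menge{(1-\lambda)x+\lambda w}{w\in\GG_f(x)}.
\end{equation*}
Thus any $z\in\GG_g(x)$ is a convex combination $(1-\lambda)x+\lambda w$ of $x$ and some $w\in\GG_f(x)$. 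The decreasing property of $f$ gives $f(w)\le f(x)$, which upgrades to $h(w)\le h(x)$ because $t\mapsto\max\{0,t\}$ is increasing; convexity of $h$ then yields $h(z)\le(1-\lambda)h(x)+\lambda h(w)\le h(x)$, and monotonicity of $t\mapsto t^\alpha$ on $[0,+\infty[$ finally gives $g(z)=h(z)^\alpha\le h(x)^\alpha=g(x)$. Since $z$ was arbitrary, $\sup g(\GG_g x)\le g(x)$.

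None of the estimates is hard; the delicate point — and the main obstacle in getting the bookkeeping right — is to keep track of the three distinct roles played by the hypothesis $\alpha\ge1$. It is needed (i) so that $g=h^\alpha$ is convex, legitimizing the use of Proposition~\ref{p:calc}\ref{p:calc3}; (ii) so that $\lambda=1/\alpha$ lands in $]0,1]$, making $z$ a genuine convex combination on the segment joining $x$ and $w$ so that convexity of $h$ applies; and (iii) so that $t\mapsto t^\alpha$ is increasing, allowing the decreasing inequality to be transported from $h=f^+$ up to $g=(f^+)^\alpha$. Once these are in place the conclusion follows by chaining the decreasing property of $f$, monotonicity of the positive part, convexity of $f^+$, and monotonicity of the power.
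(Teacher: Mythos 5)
Your proof is correct and follows essentially the same route as the paper: part (i) is the identical one-line computation via Proposition~\ref{p:calc}\ref{p:calc1}, and part (ii) uses the same decomposition $\GG_{(f^+)^\alpha}=(1-\alpha^{-1})\Id+\alpha^{-1}\GG_f$ from Proposition~\ref{p:calc}\ref{p:calc3} (with \ref{p:calc7}, which the paper leaves implicit), followed by convexity and monotonicity. The only cosmetic difference is that you apply convexity to $f^+$ and then the increasing power $t\mapsto t^\alpha$, whereas the paper applies convexity to $g=(f^+)^\alpha$ directly; the two chains of inequalities are equivalent.
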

\begin{proof}
Let $x\in X\smallsetminus C$. 
\ref{p:decalc1}:
Then $(\alpha f)\GG_{\alpha f}(x)=(\alpha f)\GG_f(x)
\leq \alpha f(x)=(\alpha f)(x)$ by Proposition~\ref{p:calc}\ref{p:calc1}.
\ref{p:decalc2}:
Set $g=(f^+)^\alpha$ and $\beta=1/\alpha$.
Then $0<\beta\leq 1$ and 
$\GG_g(x) = (1-\beta)x + \beta\GG_f(x)$ by
Proposition~\ref{p:calc}\ref{p:calc3}. 
Hence 
$\sup g(\GG_gx) \leq (1-\beta)g(x)+\beta\sup g(\GG_fx)$. 
On the other hand,
$\sup g(\GG_f(x)) \leq g(x)$ by definition of $g$.
Altogether, $\sup g (\GG_gx)\leq g(x)$, i.e., $g$ is decreasing. 
\end{proof}

The following result is complementary to the decreasing property.

\begin{proposition}
\label{p:striconv}
Suppose that $f$ is strictly convex at $x\in X$ and 
$f(x)>0$.
Then $f(Gx)>0$. 
\end{proposition}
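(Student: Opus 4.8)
The plan is to reason by contradiction: suppose $f(Gx) \le 0$, i.e.\ $Gx \in C$, and derive a contradiction with strict convexity of $f$ at $x$. Since $f(x) > 0$ we have $x \notin C$, so $s(x) \neq 0$ and $Gx = x - \bigl(f(x)/\|s(x)\|^2\bigr)s(x)$ is a genuine subgradient step away from $x$. The key geometric fact I would exploit is Fact~\ref{f:known}\ref{f:known0}, namely $f(x) + \scal{s(x)}{Gx - x} = 0$ (using $f^+(x) = f(x)$ since $f(x)>0$), which says the supporting affine function $y \mapsto f(x) + \scal{s(x)}{y-x}$ vanishes exactly at $Gx$.

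First I would record the subgradient inequality at $x$: since $s(x) \in \partial f(x)$,
\begin{equation}
f(Gx) \geq f(x) + \scal{s(x)}{Gx - x}.
\end{equation}
By Fact~\ref{f:known}\ref{f:known0} the right-hand side equals $0$, so $f(Gx) \geq 0$ automatically. The point is that equality in the subgradient inequality is what strict convexity forbids. If $f(Gx) = 0$, then the subgradient inequality holds with equality at the distinct points $x$ and $Gx$ (distinct because $f(x)>0$ forces the step $x - Gx = \bigl(f(x)/\|s(x)\|^2\bigr)s(x) \neq 0$). Strict convexity of $f$ at $x$ means that for every $y \neq x$ and every $\lambda \in \zeroun$ we have $f(\lambda x + (1-\lambda)y) < \lambda f(x) + (1-\lambda)f(y)$; equivalently, the affine minorant $y \mapsto f(x) + \scal{s(x)}{y-x}$ agrees with $f$ only at $x$ itself. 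Since $Gx \neq x$ and the affine minorant meets $f$ at $Gx$, this is the contradiction.

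The cleanest way to package the last step is to take the midpoint $m = \thalb(x + Gx)$. Applying the subgradient inequality of $s(x)$ at both $x$ (trivially) and at $m$, and using convexity, one gets $f(m) \geq f(x) + \scal{s(x)}{m - x} = \thalb\bigl(f(x) + f(Gx)\bigr)$ after substituting the value from Fact~\ref{f:known}\ref{f:known0}; but strict convexity at $x$ gives the strict reverse inequality $f(m) < \thalb f(x) + \thalb f(Gx)$, which is impossible. Hence $f(Gx) \leq 0$ cannot hold, and therefore $f(Gx) > 0$.

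I expect the only real subtlety to be pinning down the precise notion of \emph{strict convexity at a point} and checking it is strong enough to yield a strict inequality at the specific midpoint $m$; the definition must guarantee strictness along the particular segment $[x, Gx]$, which it does since $Gx \neq x$. Everything else is a direct application of Fact~\ref{f:known}\ref{f:known0} together with the subgradient inequality, so the argument should be short. I would make sure to invoke $f(x) > 0$ twice: once to guarantee $s(x) \neq 0$ so that the formula for $Gx$ applies, and once to guarantee $Gx \neq x$ so that strict convexity bites.
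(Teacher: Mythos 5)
Your proof is correct and is essentially the paper's argument: both rest on the subgradient inequality of $s(x)$ applied at the midpoint $\thalb(x+Gx)$, strict convexity of $f$ at $x$ (valid since $f(x)>0$ forces $Gx\neq x$), and Fact~\ref{f:known}\ref{f:known0}. The paper simply chains the same inequalities directly to get $f(Gx)>f(x)+\scal{s(x)}{Gx-x}=0$, whereas you wrap them in a proof by contradiction; your own estimates ($f(m)\geq\thalb f(x)$ versus $f(m)<\thalb f(x)+\thalb f(Gx)$) already yield the conclusion without that detour.
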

\begin{proof}
Recall that $f$ is strictly convex at $x$ if
$(\forall y\in X\smallsetminus\{x\})$
$(\forall\lambda\in\zeroun)$ 
$f((1-\lambda)x+\lambda y)<(1-\lambda)f(x)+\lambda f(y)$.
Arguing as in \cite[proof of Proposition~5.3.4.(a)]{BV}, we see
that 
$\thalb\scal{s(x)}{Gx-x} = \scal{s(x)}{(\thalb x+\thalb Gx)-x}
\leq f(\thalb x + \thalb Gx)-f(x)<
\thalb f(x)+\thalb f(Gx)-f(x) = \thalb(f(Gx)-f(x))$.
Therefore, 
$f(Gx)>f(x)+\scal{s(x)}{Gx-x} = 0$ using
Fact~\ref{f:known}\ref{f:known0}. 
\end{proof}

\begin{remark}
\label{r:striconv}
Suppose that $f$ is strictly convex.
Then Proposition~\ref{p:striconv} shows that
iterating $G$ starting at a point outside $C$ will never reach
$C$ in finitely many steps. 
This is clearly illustrated by Example~\ref{ex:dCp},
which shows that the function $d_C$, even though it is neither strictly convex
nor differentiable everywhere, performs best because $G=P_C$ yields
a solution after just one step. 
\end{remark}

\section{The subgradient projector of $(x_1,x_2)\mapsto
|x_1|^p+|x_2|^p$}

\label{s:pnorm}

The following result complements
Example~\ref{ex:ell1}.

\begin{proposition}
\label{p:ellp}
Suppose that $X=\RR^2$
and that $f\colon (x_1,x_2)\mapsto |x_1|^p + |x_2|^p$,
where $p>1$, and let $x=(x_1,x_2)\in\RR^2\smallsetminus\{(0,0)\}$.
Then 
\begin{equation}
\label{e:ellpG}
Gx = \left( x_1 -
\frac{\big(|x_1|^p+|x_2|^p\big)|x_1|^{p-1}\sgn(x_1)}{p\big(|x_1|^{2p-2}+|x_2|^{2p-2}\big)},
x_2 -
\frac{\big(|x_1|^p+|x_2|^p\big)|x_2|^{p-1}\sgn(x_2)}{p\big(|x_1|^{2p-2}+|x_2|^{2p-2}\big)}\right)
\end{equation}
and the following hold:
\begin{enumerate}
\item
\label{p:ellp1}
If $p\geq 2$, then
$f(x)\geq f(Gx)\geq (1-2p^{-1})^pf(x)$.
\item
\label{p:ellp2}
If $1<p\leq 2$, then
$f(x)\geq f(Gx)\geq 2^{-1}(1-p^{-1})^pf(x)$.
\item
\label{p:ellp3}
If $1<p<2$, then
$G$ is not monotone.
\end{enumerate}
\end{proposition}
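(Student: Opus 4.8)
The plan is to first record the formula~\eqref{e:ellpG} and then collapse all three claims onto a single real parameter. Since $f$ is of class $C^1$ when $p>1$, with $\nabla f(x)=p\big(|x_1|^{p-1}\sgn(x_1),|x_2|^{p-1}\sgn(x_2)\big)$ and $C=\{(0,0)\}$, substituting $\nabla f$ into $Gx=x-f(x)\|\nabla f(x)\|^{-2}\nabla f(x)$ yields~\eqref{e:ellpG} at once. The crucial structural observation is that $f$ is positively homogeneous of degree $p$, hence $G$ is positively homogeneous of degree $1$, while $f$ is invariant and $G$ equivariant under the coordinate sign-flips and the swap $(x_1,x_2)\mapsto(x_2,x_1)$. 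Therefore the ratio $f(Gx)/f(x)$ is invariant under scaling and under this eight-element symmetry group, so it suffices to treat $x=(1,t)$ with $t\in[0,1]$. Writing $\nu=\nu(t)=(1+t^p)\big/\big(p(1+t^{2p-2})\big)$, we get $Gx=(1-\nu,\,t-\nu t^{p-1})$ and $f(x)=1+t^p$. A short computation records the three facts I will reuse: $\nu<1$ always (so $(Gx)_1=1-\nu>0$); $\nu\le 2/p$, because $1+t^p\le 2\le 2(1+t^{2p-2})$; and, when $1<p\le 2$, even $\nu\le 1/p$, because then $t^p\le t^{2p-2}$ on $[0,1]$.

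For the upper estimate $f(Gx)\le f(x)$ (the decreasing property, which is the heart of the matter and which I will prove uniformly in $p>1$) I split according to the sign of $(Gx)_2=t-\nu t^{p-1}$. If $(Gx)_2\ge 0$ --- the \emph{non-overshoot} case, which covers all of $p\ge 2$ since there $\nu t^{p-2}\le\nu\le 2/p\le 1$ --- then $0\le(Gx)_1\le 1$ and $0\le(Gx)_2\le t$, so $f(Gx)=(1-\nu)^p+(t-\nu t^{p-1})^p\le 1+t^p=f(x)$ term by term. The delicate case is $(Gx)_2<0$, which occurs only for $1<p<2$ and small $t$; here a coordinatewise bound fails badly, since $|(Gx)_2|$ can far exceed $t$. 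The trick I will use is $f(Gx)=(1-\nu)^p+(\nu t^{p-1}-t)^p\le(1-\nu)+\nu=1\le f(x)$, combining $(1-\nu)^p\le 1-\nu$ (valid as $1-\nu\in[0,1]$, $p\ge 1$) with the chain $(\nu t^{p-1}-t)^p\le(\nu t^{p-1})^p=\nu^p t^{p(p-1)}\le\nu$, where the last step uses $\nu^p\le\nu$ and $t^{p(p-1)}\le 1$. I expect this overshoot estimate to be the main obstacle: it is the one place where the naive comparison collapses and one must exploit the precise interplay $\nu^p\le\nu\le 1$.

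For the lower bounds the two regimes genuinely differ. When $p\ge 2$ there is no overshoot, and $\nu\le 2/p$, $\nu t^{p-2}\le 2/p$ give $1-\nu\ge 1-2/p\ge 0$ and $1-\nu t^{p-2}\ge 1-2/p\ge 0$; hence $f(Gx)=(1-\nu)^p+t^p(1-\nu t^{p-2})^p\ge(1-2/p)^p(1+t^p)=(1-2/p)^p f(x)$, which is~(i). When $1<p\le 2$ I will discard the erratic second coordinate and keep only the first: from $\nu\le 1/p$ we get $(Gx)_1=1-\nu\ge 1-1/p\ge 0$, so $f(Gx)\ge(1-\nu)^p\ge(1-1/p)^p$, and combining this with $f(x)=1+t^p\le 2$ yields $f(Gx)\ge\thalb(1-1/p)^p f(x)$, which is~(ii).

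Finally, for the non-monotonicity claim~(iii) I will exploit the same overshoot phenomenon. Because $G$ is equivariant under the sign-flip $T\colon(x_1,x_2)\mapsto(-x_1,x_2)$, taking $y=(s,1)$ and $x=Ty=(-s,1)$ gives $Gx=(-(Gy)_1,(Gy)_2)$, whence $\scal{x-y}{Gx-Gy}=4s\,(Gy)_1$. It therefore suffices to produce $s>0$ with $(Gy)_1<0$. Since $(Gy)_1=s-(s^p+1)s^{p-1}\big/\big(p(s^{2p-2}+1)\big)$ and $p-1<1$, as $s\downarrow 0$ the subtracted term behaves like $s^{p-1}/p$, which dominates $s$; hence $(Gy)_1<0$ for all sufficiently small $s$, and $G$ fails to be monotone. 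This parallels the explicit computation in Example~\ref{ex:ell1} for the limiting case $p=1$.
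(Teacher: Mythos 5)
Your proposal is correct, and its skeleton largely matches the paper's: reduce to a one-parameter family (the paper, in part (ii), by factoring out $|x_2|^p$ and setting $t=|x_1/x_2|$; you, more systematically, by degree-one homogeneity of $G$ and the dihedral symmetry of $f$, which you then exploit in all three parts); your quantities $1-\nu$ and $t-\nu t^{p-1}$ are exactly the paper's coefficients $c_2$ and $c_1$, and your key bounds $\nu\le 2/p$ (for $p\ge 2$) and $\nu\le 1/p$ (for $p\le 2$) are precisely the paper's $c_i\le 2/p$ and $c_2\ge 1-1/p$, so the two lower bounds are the same in substance. Your non-monotonicity pair $(\pm s,1)$ with $s\downarrow 0$ is the paper's pair $(\pm 1,\xi)$, $\xi\to\pinf$, rescaled by homogeneity; you argue by small-$s$ asymptotics where the paper invokes l'H\^opital, a cosmetic difference. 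The genuine divergence is at the crux, the decreasing property for $1<p\le 2$: the paper proves the $\ell^1$-type bound $|c_1|+c_2\le 1$, i.e.\ \eqref{e:piday1}, by reducing it to the two polynomial inequalities \eqref{e:piday3} and verifying those with $t^{p-1}\le 1$ and the tangent-line inequality $pt\le 1+t^p$; you instead split on the sign of the overshooting coordinate and, in the overshoot case, settle for the cruder estimate $(1-\nu)^p+(\nu t^{p-1}-t)^p\le(1-\nu)+\nu=1\le f(x)$, which needs nothing beyond monotonicity of $u\mapsto u^p$ and $u^p\le u$ on $[0,1]$. The paper's route yields the sharper structural fact \eqref{e:piday1}; yours is more elementary and, together with the term-by-term comparison in the non-overshoot case, establishes the decreasing property uniformly for all $p>1$ in a single dichotomy rather than in two separately normalized regimes. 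All of your auxiliary claims ($\nu<1$ always, $\nu\le 2/p$, $\nu\le1/p$ for $p\le2$, equivariance of $G$, and $4s\,(Gy)_1<0$ for small $s$) check out, so there is no gap.
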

\begin{proof}
The formula \eqref{e:ellpG} is a direct verification,
and \ref{p:ellp1}\&\ref{p:ellp2} hold when $x_1=0$ or $x_2=0$. 
We thus assume that $x_1\neq 0$ and $x_2\neq 0$. 

\ref{p:ellp1}:
Note that
\begin{equation}
f(Gx)=|x_1|^p\big|1-c_1\big|^p + |x_2|^p\big|1-c_2\big|^p,
\quad\text{where}\quad
c_i =
\frac{\big(|x_1|^p+|x_2|^p\big)|x_i|^{p-2}}{p\big(|x_1|^{2p-2}+|x_2|^{2p-2}\big)}.
\end{equation}
If $i\in\{1,2\}$ and $m\in\{1,2\}$ is such that $|x_m|=\max\{|x_1|,|x_2|\}$,
then 
$c_i \leq (2|x_m|^p|x_m|^{p-2})p^{-1}(|x_m|^p+0)^{-1}= 2/p$.
Hence $1\geq 1-c_i\geq 1-2p^{-1}\geq 0$ and the inequalities follow.

\ref{p:ellp2}:
We assume that $|x_1|\leq|x_2|$, the other case is treated analogously. 
Set $t = |x_1/x_2|$, 
\begin{equation}
c_1 = t - \frac{t^{2p-1}+t^{p-1}}{p\big(1+t^{2p-2}\big)}
\quad\text{and}\quad
c_2 = 1- \frac{1+t^p}{p\big(1+t^{2p-2}\big)},
\end{equation}
and check that
\begin{equation}
\label{e:piday4}
f(Gx) = |x_2|^p\big(|c_1|^p + |c_2|^p\big). 
\end{equation}
Since $p-2\leq 0$, we have $t^{p-2}\geq 1$ and hence
\begin{equation}
\label{e:piday5}
1 \geq c_2 \geq 1 - \frac{1+t^p}{p\big(1+t^p\big)} = 1- \tfrac{1}{p} \geq 0.
\end{equation}
Thus $c_2\geq 0$.
We now claim that
\begin{equation}
\label{e:piday1}
|c_1|+c_2 \leq 1.
\end{equation}
This will imply $\max\{|c_1|,|c_2|\}\leq 1$;
hence 
$\max\{|c_1|^p,|c_2|^p\}\leq 1$, 
\begin{equation}
f(Gx) \leq |x_2|^p\big(|c_1|+|c_2|\big)\leq |x_2|^p\leq f(x),
\end{equation}
and the decreasing property of $f$ follows.
Observe that \eqref{e:piday1} is equivalent to 
\begin{subequations}
\label{e:piday2}
\begin{align}
c_1+c_2 &\leq 1 \label{e:piday2a}\\
-c_1+c_2&\leq 1 \label{e:piday2b}
\end{align}
\end{subequations}
and hence to 
\begin{subequations}
\label{e:piday3}
\begin{align}
t &\leq \frac{(1+t^p)(1+t^{p-1})}{p(1+t^{2p-2})}\label{e:piday3a}\\
\frac{t^{p-1}(1+t^p)}{p(1+t^{2p-2})} &\leq t+
\frac{1+t^p}{p(1+t^{2p-2})}. \label{e:piday3b}
\end{align}
\end{subequations}
Now check that \eqref{e:piday3} holds by 
using $t^{p-1}\leq 1$ and, for \eqref{e:piday3a}, the 
convexity of $h\colon \xi\mapsto 1+\xi^p$,
which implies $h(t)\geq h(1)+h'(1)(t-1)$, i.e.,
$pt \leq 1+t^p$.
Furthermore, using \eqref{e:piday4}, \eqref{e:piday5} and
the assumption that $|x_2|\geq|x_1|$, 
we obtain 
\begin{equation}
f(Gx) \geq c_2^p|x_2|^p \geq \big(1-\tfrac{1}{p}\big)^p|x_2|^p
\geq \big(1-\tfrac{1}{p}\big)^p \frac{|x_1|^p+|x_2|^p}{2} = 
\frac{\big(1-\tfrac{1}{p}\big)^p}{2} f(x).
\end{equation}
\ref{p:ellp3}:
Consider the points $y=(1,\xi)$ and $z=(-1,\xi)$, where $\xi>0$.
Then $y-z=(2,0)$ and
\begin{subequations}
\begin{equation}
Gy = \left( 1 - \frac{1+\xi^p}{p(1+\xi^{2p-2})}, \xi - 
\frac{(1+\xi^p)\xi^{p-1}}{p(1+\xi^{2p-2})}\right)
\end{equation}
and 
\begin{equation}
Gz = \left( -1 + \frac{1+\xi^p}{p(1+\xi^{2p-2})}, \xi - 
\frac{(1+\xi^p)\xi^{p-1}}{p(1+\xi^{2p-2})}\right). 
\end{equation}
\end{subequations}
It follows that
\begin{equation}
\scal{Gy-Gz}{y-z} = 
4 \left( 1 - \frac{1+\xi^p}{p(1+\xi^{2p-2})}\right)<0
\quad
\text{as $\xi\to\pinf$}
\end{equation}
because 
$\lim_{\xi\to\pinf} (1+\xi^p)p^{-1}/(1+\xi^{2p-2})
= \lim_{\xi\to\pinf} (2p-2)^{-1}\xi^{2-p}=\pinf$ using
l'H\^{o}pital's rule. 
Therefore, $G$ is not monotone. 
\end{proof}

\begin{remark}
The operator $G$ of Proposition~\ref{p:ellp} seems to defy an
easy analysis. It would be interesting to obtain
complete characterizations in terms of $p$ of the 
following, increasingly more restrictive, properties:
$G$ is monotone;
$\Id-G$ is nonexpansive;
$G$ is firmly nonexpansive. 
With the help of Maple it is possible to check the following
statements:
\begin{enumerate}
\item
If $p\in\{2,4,6\}$, then
$G$ is firmly nonexpansive and hence monotone.
\item
If $p\in\{8,10,12\}$, then
$G$ is not firmly nonexpansive; however,
$\Id-G$ is nonexpansive and $G$ is monotone\,\footnote{Experiments
with Maple suggest that this pattern may hold true for every even
integer greater than or equal $8$.}.
\end{enumerate}
Suppose first that $p\in\{2,4,6\}$.
Then $G$ is firmly nonexpansive
$\Leftrightarrow$
$N=2G-\Id$ is nonexpansive
$\Leftrightarrow$
$(\forall x\in X)$ $Jx$ is nonexpansive, where $Jx$ is the Jacobian of $N$
at $x$
$\Leftrightarrow$
$(Jx)^*Jx\preceq \Id$
$\Leftrightarrow$
$\Id-(Jx)^*Jx$ is positive semidefinite. 
The last condition leads to checking three inequalities
using the principal minor criterion for positive semidefiniteness.
Dividing by appropriate powers of $x_1$ and $x_2$, 
this reduces to checking whether three polynomials in one
variable are
positive. Sturm's Theorem
(see, e.g., \cite[Theorem~1.4.3]{Prasolov}), which is implemented in
Maple and Mathematica, combinded with
\cite[Theorem~1.1.2]{Prasolov} finally complete the verification. 

Now suppose that $p\in\{8,10,12\}$. 
The approach just outlined shows that
$G$ is not firmly nonexpansive. 
Note the implications:
$G$ is monotone
$\Leftarrow$
$N=\Id-G$ is nonexpansive
$\Leftrightarrow$
$(\forall x\in X)$ $Jx$ is nonexpansive, 
where $Jx$ is the Jacobian of $N$ at $x$
$\Leftrightarrow$
$(Jx)^*Jx\preceq \Id$
$\Leftrightarrow$
$\Id-(Jx)^*Jx$ is positive semidefinite,
which is checked using again Sturm's Theorem. 
\end{remark}

\section{$G$ and the Yamagishi--Yamada operator}

\label{s:YY}

In this last section we study the accelerated version of $G$
proposed by Yamagishi and Yamada in \cite{YY}.
For fixed $L>0$ and $r>0$, 
we assume in addition that 
\begin{empheq}[box=\mybluebox]{equation}
\text{$f$ is \frechet\ differentiable and $\nabla f$
is Lipschitz continuous with constant $L$},
\end{empheq}
and that 
\begin{empheq}[box=\mybluebox]{equation}
\text{$f$ is bounded below with $\inf f(X)\geq -\rho$,}
\end{empheq}
and we set
\begin{empheq}[box=\mybluebox]{equation}
(\forall x\in X)\quad
\theta(x) = \frac{\|\nabla f(x)\|^2}{2L}-\rho.
\end{empheq}
By \cite[Lemma~1]{YY}, we have
\begin{equation}
\label{e:ftheta}
f \geq \theta.
\end{equation}
The Yamagishi--Yamada operator \cite{YY} is
\begin{empheq}[box=\mybluebox]{equation}
Z\colon X\to X,
\end{empheq}
defined at $x\in X$ by 
\begin{empheq}[box=\mybluebox]{equation}
\label{e:Z}
Zx= 
\begin{cases}
x, &\text{if $f(x)\leq 0$;}\\[+5mm]
\displaystyle 
x - \frac{\nabla f(x)}{\|\nabla f(x)\|^2} \, f(x),
&\text{if $f(x)>0$ and $\theta(x)\leq 0$;}\\[+5mm]
\displaystyle x - \frac{\nabla f(x)}{\|\nabla f(x)\|^2}
\,\Big(f(x)+\big(\textstyle \sqrt{\theta(x)+\rho}-\sqrt{\rho}\big)^2\Big),
&\text{if $f(x)>0$ and $\theta(x)>0$.}
\end{cases}
\end{empheq}
Note that if $f(x)\leq 0$ or $\theta(x)\leq 0$, then
$Zx=Gx$.

We now prove that if $X=\RR$, then $Z$ is itself a subgradient
projector. 

\begin{theorem}
\label{t:caifang}
Suppose that $X=\RR$ and that $f$ is also twice differentiable.
Then for every $x\in \RR$, 
\eqref{e:Z} can be rewritten as 
\begin{equation}
\label{e:Z1}
Zx= 
\begin{cases}
x, &\text{if $f(x)\leq 0$;}\\[+5mm]
\displaystyle 
x - \frac{1}{f'(x)}\,f(x), 
&\text{if $f(x)>0$ and $|f'(x)|\leq\sqrt{2L\rho}$;}\\[+5mm]
\displaystyle x - \frac{1}{f'(x)}
\,\left(f(x)+\left(\frac{|f'(x)|}{\sqrt{2L}}-\sqrt{\rho}\right)^2\right),
&\text{if $f(x)>0$ and $|f'(x)|>\sqrt{2L\rho}$.}
\end{cases}
\end{equation}
Set $D=\menge{x\in X}{\theta(x)\leq 0}$ 
and assume
that $\bd D \subseteq X\smallsetminus C$.
Then $D$ is a closed convex superset of $C$, and $Z$ is a subgradient projector of a function $y$,
defined as follows.
On $D$, we set $y$ equal to $f$.
The set $\RR\smallsetminus D$ is empty, or an open interval,
or the disjoint union of two open intervals.
Assume that $I$ is one of these nonempty intervals, and
let $q$ be defined on $I$ such that
\begin{equation}
(\forall x\in I)\quad
q'(x) = \frac{1}{x-Zx}.
\end{equation}
Now set $d=P_D(I)\in D\smallsetminus C$ and 
\begin{equation}
(\forall x\in I)\quad
y(x) = \frac{f(d)}{e^{q(d)}}e^{q(x)}.
\end{equation}
The so-constructed function $y\colon\RR\to\RR$ is convex,
and it satisfies $Z=G_y$.
\end{theorem}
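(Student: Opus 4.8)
The plan is to handle $Z$ in four stages: derive the one-dimensional formula \eqref{e:Z1}, analyze the set $D$, verify the identity $Z=G_y$ piece by piece, and finally show that the glued function $y$ is $C^1$ and convex.

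First, \eqref{e:Z1} is a direct substitution into \eqref{e:Z}. Since $X=\RR$, $\nabla f(x)=f'(x)$ and hence $\nabla f(x)/\|\nabla f(x)\|^2=1/f'(x)$; moreover $\theta(x)=(f'(x))^2/(2L)-\rho$, so $\theta(x)\leq 0\Leftrightarrow|f'(x)|\leq\sqrt{2L\rho}$ and $\sqrt{\theta(x)+\rho}=|f'(x)|/\sqrt{2L}$. For the structure of $D$: as $f$ is convex and twice differentiable, $f'$ is continuous and nondecreasing, so $D=\menge{x\in\RR}{|f'(x)|\leq\sqrt{2L\rho}}$ is the preimage of a compact interval under a monotone continuous map, hence a closed interval; thus $D$ is closed and convex and $\RR\smallsetminus D$ is empty, an open interval, or a disjoint union of two open intervals. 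The inclusion $C\subseteq D$ follows from \eqref{e:ftheta}, since $x\in C$ gives $\theta(x)\leq f(x)\leq 0$. The standing hypothesis $\bd D\subseteq X\smallsetminus C$ then forces every endpoint $d=P_D(I)$ to satisfy $f(d)>0$, and it rules out $D$ being a singleton (otherwise $C\subseteq D$ and $C\neq\varnothing$ would put that single point in $C$).

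Next I would verify $Z=G_y$ on each piece. On the interior of $D$ we have $\theta\leq 0$, so $Zx=Gx=G_fx$; since $y=f$ there, $G_y=G_f=Z$. On a component $I$ of $\RR\smallsetminus D$ the construction is engineered to reproduce $Z$: from $y(x)=\frac{f(d)}{e^{q(d)}}e^{q(x)}$ we get $y'(x)=y(x)q'(x)$, whence $y(x)/y'(x)=1/q'(x)=x-Zx$ by the defining relation $q'=1/(x-Zx)$. On $I$ one has $f\geq\theta>0$ and $f(d)>0$, so $y>0$ and therefore $G_yx=x-y(x)/y'(x)=Zx$. Here one checks that $x-Zx\neq 0$ on $I$ (its numerator $f(x)+(|f'(x)|/\sqrt{2L}-\sqrt{\rho})^2$ is positive and $f'(x)\neq 0$), so that $q$ is a genuine antiderivative, and that $y$ does not depend on the additive constant in $q$. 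Finally $C_y=C$, because $y=f$ on $D$ while $y>0$ on each $I$ and $C\subseteq D$.

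The delicate stage---and the main obstacle---is to show that $y$ is $C^1$ across each endpoint $d$ and convex. At $d$ one has $\theta(d)=0$, i.e.\ $|f'(d)|/\sqrt{2L}=\sqrt{\rho}$, so the squared term in \eqref{e:Z1} vanishes and $\lim_{x\to d,\,x\in I}(x-Zx)=f(d)/f'(d)$ (the degenerate possibility $f'(d)=0$, i.e.\ $\rho=0$, is excluded by $\bd D\subseteq X\smallsetminus C$, which then forces $\RR\smallsetminus D=\varnothing$). Hence $q'(d)=f'(d)/f(d)$ and $y'(d)=y(d)q'(d)=f'(d)$, matching the one-sided derivative of $f$ coming from $D$, while $y(d)=f(d)$ gives continuity; thus $y$ is $C^1$. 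Convexity on $D$ is immediate since $y=f$ there. On $I$, writing $w=x-Zx=1/q'$ one finds $y''=y(1-w')/w^2$, so (as $y>0$) convexity reduces to $w'\leq 1$, i.e.\ $Z'\geq 0$. The crux is the derivative identity
\begin{equation}
Z'(x)=f''(x)\left(\frac{f(x)+\rho}{(f'(x))^2}-\frac{1}{2L}\right),
\end{equation}
valid on both components (after expanding the square, the two cases differ only by the additive constant $\pm\sqrt{2\rho/L}$, which drops out under differentiation). Since $f''\geq 0$ and, by \eqref{e:ftheta}, $f+\rho\geq(f')^2/(2L)$, the bracket is nonnegative, so $Z'\geq 0$ and $y''\geq 0$ on $I$. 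The $C^1$ gluing then upgrades piecewise convexity to convexity of $y$ on all of $\RR$, and the piecewise identities give $Z=G_y$ throughout.
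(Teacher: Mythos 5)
Your proposal is correct, and while it necessarily follows the paper's overall skeleton (the construction of $y$ is dictated by the statement itself), the key convexity step is organized around a different identity than the one the paper uses. The paper introduces the auxiliary function $z(x)=\sqrt{\theta(x)+\rho}-\sqrt{\rho}$, establishes the inequality $0\leq f''(f-\theta)=f''(f+z^2)-2f'zz'$ in \eqref{e:angela}, and then computes $q''$ and $y''=y\big((q')^2+q''\big)=y\,f''(f-\theta)/(f+z^2)^2\geq 0$. You instead note that $y''=yZ'/(x-Zx)^2$, so that (since $y>0$ on $I$) convexity of $y$ on $I$ is \emph{equivalent} to monotonicity of $Z$ there, and you verify $Z'=f''\big((f+\rho)/(f')^2-1/(2L)\big)\geq 0$ by expanding the square in \eqref{e:Z1}, observing that the sign-dependent constant $\pm\sqrt{2\rho/L}$ vanishes under differentiation; the bracket is nonnegative precisely because of \eqref{e:ftheta}. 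The two computations are algebraically equivalent (your bracket equals $(f-\theta)/(f')^2$), but your route buys a clean structural insight --- $y$ is convex on $I$ exactly when $Z$ is increasing there --- and avoids the bookkeeping with $z$ and $z'$. You also supply several details the paper leaves implicit: the proof that $D$ is a closed interval containing $C$ (via monotonicity of $f'$ and \eqref{e:ftheta}), the verification that $C_y=C$ so that $G_y$ agrees with $Z$ on $D$ as well as off it, and the degenerate case $\rho=0$ (equivalently $f'(d)=0$ at an endpoint), which you correctly show is incompatible with the hypothesis $\bd D\subseteq X\smallsetminus C$: in that case $D=C$ would have empty boundary, hence be clopen and equal to $\RR$, leaving no interval $I$ to consider.
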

\begin{proof}
It is easy to check that \eqref{e:Z1} is the same as \eqref{e:Z}.
Let $x\in\RR$ such that $f(x)>0$ and $\theta(x)\geq 0$, and set
\begin{equation}
\label{e:z}
z(x) = \frac{|f'(x)|}{\sqrt{2L}}-\sqrt{\rho} =
\frac{\sgn\big(f'(x)\big)f'(x)}{\sqrt{2L}}-\sqrt{\rho} =
\sqrt{\theta(x)+\rho}-\sqrt{\rho}\geq 0.
\end{equation}
Then 
\begin{equation}
\label{e:dz}
z'(x) = \frac{\sgn\big(f'(x)\big)f''(x)}{\sqrt{2L}}.
\end{equation}
Using the convexity of $f$, \eqref{e:ftheta}, 
\eqref{e:z}, and \eqref{e:dz}, 
we obtain 
\begin{subequations}
\label{e:angela}
\begin{align}
0 &\leq f''(x)\big(f(x)-\theta(x)\big)\\
&= f''(x)\left( f(x) -
\left(\frac{|f'(x)|}{\sqrt{2L}}+\sqrt{\rho}\right)
\left(\frac{|f'(x)|}{\sqrt{2L}}-\sqrt{\rho}\right)\right)\\
&= f''(x)\left(f(x)+z(x)\left(
z(x)-\frac{2|f'(x)|}{\sqrt{2L}}\right)\right)\\
&=f''(x)\big(f(x)+z^2(x)\big) 
- f'(x)2z(x)\frac{\sgn\big(f'(x)\big)f''(x)}{\sqrt{2L}}\\
&=f''(x)\big(f(x)+z^2(x)\big) - f'(x)\big(2z(x)z'(x)\big). 
\end{align}
\end{subequations}
Because $x-Zx= (f(x)+z^2(x))/f'(x)$ is 
continuous, it is clear that there is an antiderivative $q$ on
$I$ such that
\begin{equation}
\label{e:dq}
q'(x) = \frac{1}{x-Zx} = \frac{f'(x)}{f(x)+z^2(x)}.
\end{equation}
Calculus and \eqref{e:angela} now result in
\begin{subequations}
\label{e:ddq}
\begin{align}
q''(x) &= \frac{f''(x)\big(f(x)+z^2(x)\big) -
f'(x)\big(f'(x)+2z(x)z'(x)\big)}{\big(f(x)+z^2(x)\big)^2}\\
&= \frac{f''(x)\big(f(x)-\theta(x)\big) -
\big(f'(x)\big)^2}{\big(f(x)+z^2(x)\big)^2}.
\end{align}
\end{subequations}
Observe that $y$ is clearly continuous everywhere. 
Furthermore,
$y'(x) = \frac{f(d)}{e^{q(d)}} e^{q(x)}q'(x)$
and hence, using \eqref{e:dq}, \eqref{e:ddq} and again
\eqref{e:angela}, we obtain 
\begin{align}
y''(x) &= \frac{f(d)}{e^{q(d)}}\Big( e^{q(x)}\big(q'(x)\big)^2 +
e^{q(x)}q''(x)\Big)\\
&=  \frac{f(d)}{e^{q(d)}} e^{q(x)}\Big( \big(q'(x)\big)^2 +
q''(x)\Big)\\
&= y(x)
\frac{f''(x)\big(f(x)-\theta(x)\big)}{\big(f(x)+z^2(x)\big)^2}\\
&\geq 0.
\end{align}
Hence $y$ is convex on $I$.
As $x\in I$ approaches $d$,
we deduce (because $d\notin C$, i.e., $f(d)>0$) that 
$q'(x)\to f'(d)(f(d)+z^2(d))^{-1} \to f'(d)/f(d)$
and hence that $y'(x)\to f(d)/e^{q(d)}e^{q(d)}f'(d)/f(d) = f'(d)$.
It follows that $y$ is convex on $\RR$.
Finally, if $x\notin D$, then
$G_y(x) = x - y(x)/y'(x) = x-1/q'(x) = x - (x-Zx)=Zx$.
\end{proof}

\begin{example}
Consider Theorem~\ref{t:caifang}
and assume that $f\colon x\mapsto x^2-1$,
that $L=3$, and that $\rho=1$.
Then \eqref{e:Z1} turns into
\begin{equation}
Zx= 
\begin{cases}
x, &\text{if $|x|\leq 1$;}\\[+3mm]
\displaystyle 
\frac{x^2+1}{2x}, 
&\text{if $1<|x|\leq\sqrt{6}/2$;}\\[+3mm]
\displaystyle \frac{x^2+2\sqrt{6}|x|}{6x}, 
&\text{if $|x|>\sqrt{6}/2$.}
\end{cases}
\end{equation}
Hence $D= \big[-\sqrt{6}/2,\sqrt{6}/2\big]$. 
Using elementary manipulations, we obtain 
\begin{equation}
(\forall x\in \RR\smallsetminus D)\quad
q(x) =
\tfrac{6}{5}\ln\big(\tfrac{5}{6}|x|-\tfrac{\sqrt{6}}{3}\big);
\end{equation}
consequently, the function $y$, given by 
\begin{equation}
(\forall x\in\RR) \quad y(x) = 
\begin{cases}
x^2-1, &\text{if $|x|\leq\sqrt{6}/2$;}\\[+3mm]
\displaystyle\frac{72^{1/5}}{6}\big(5|x|-2\sqrt{6}\big)^{6/5},
&\text{if $|x|>\sqrt{6}/2$,}
\end{cases}
\end{equation}
satisfies $G_y = Z$ by Theorem~\ref{t:caifang}. 
\end{example}

\section*{Acknowledgments}

HHB was partially supported by a Discovery Grant and an Accelerator
Supplement of the Natural Sciences and Engineering
Research Council of Canada (NSERC) and by the Canada Research Chair Program.
CW was partially supported by a grant from
Shanghai Municipal Commission for Science and Technology
(13ZR1455500). 
XW was partially supported by a Discovery Grant of NSERC.
JX was partially supported by NSERC grants of HHB and XW.


\end{document}